\documentclass[a4paper,reqno]{amsart}
\addtolength{\textwidth}{2cm} \addtolength{\hoffset}{-1cm}
\addtolength{\textheight}{2cm} \addtolength{\voffset}{-.5cm}
\usepackage{amssymb}
\usepackage{amsmath}
\usepackage{amsthm}
\usepackage{array}
\usepackage{graphicx}
\usepackage{texdraw}
\usepackage{amsfonts}
\usepackage{hyperref}
\usepackage{caption}
\usepackage{color}
\usepackage{enumitem}

\graphicspath{{./pics/}}


\begingroup
\theoremstyle{plain}
\newtheorem{theorem}{Theorem}[section]
\newtheorem{proposition}[theorem]{Proposition}
\newtheorem{lemma}[theorem]{Lemma}

\theoremstyle{definition}

\newtheorem{remark}[theorem]{Remark}

\endgroup

\newcommand{\ep}{\varepsilon}
\newcommand{\re}{\mathbb R}
\newcommand{\ff}{\varphi}
\renewcommand{\O}{\Omega }
\newcommand{\p}{\partial }

\title[Maximization of the Steklov eigenvalues - diameter constraint]{Maximization of the Steklov eigenvalues with a diameter constraint}
\author{A. Al Sayed, B. Bogosel, A. Henrot, F. Nacry}

\begin{document}

\begin{abstract} 
In this paper, we address the problem of maximizing the Steklov eigenvalues
with a diameter constraint. We provide an estimate of the
Steklov eigenvalues for a convex domain in terms of its diameter and volume and
we show the existence of an optimal convex domain. We establish that balls are never maximizers,
even for the first non-trivial eigenvalue that contrasts with the case of volume or
perimeter constraints. Under an additional regularity assumption, we  are able to prove that the Steklov eigenvalue is multiple for the optimal domain. We illustrate our theoretical results by giving some optimal domains in the plane thanks to a numerical algorithm.
\end{abstract}

\maketitle

\medskip

{\bf Keywords.} {Shape optimization, shape derivative,} spectral geometry, Steklov eigenvalues, diameter constraint.


{\bf 2010 MSC:} 35P15, 49Q10, 49R05.

\section{Introduction}

Among classical questions in spectral geometry lies the problem of minimizing/maximizing under geometric constraints one (or several) eigenvalues of the Laplace operator with various boundary conditions. It has attracted much attention since the first conjecture by Lord Rayleigh stated in this famous book: {\it The Theory of Sound}. In particular, several important open problems have been solved these last twenty years. We refer the reader to the survey \cite{ash}, the monograph \cite{H1} and the recent book \cite{H2} for a good overview on that topic. 

{ In this paper, we deal with the eigenvalue problem for the Laplace operator with Steklov boundary conditions. For a nice survey covering many properties and questions related to these eigenvalues, we refer to \cite{GP1} ({see also \cite[Chapter 5]{H2}}).
Recall that a real $\sigma\geq 0$ is a {\it Steklov eigenvalue} provided that there is $u\in H^1(\O)$ with $u\neq 0$ such that}
\begin{equation}\label{eigen1}
\left\{\begin{array}{ll}
\Delta u = 0 \quad &  \hbox{in }\O,
\\
\frac{\p u}{\p n}= \sigma u \quad &  \hbox{on }\p \O \,.\end{array}
\right.
\end{equation}
{Here and below, $\frac{\p }{\p n}$ stands for the outward normal derivative and $\O$ is a smooth (say Lipschitzian) bounded and open set in $\re^d$. As usual, the problem \eqref{eigen1} is considered in the weak sense, that is}
$$
 \int_\O \nabla u.\nabla v dx= \sigma \int_{\p\O} uv ds\, \quad \forall v\in H^1(\O) .
$$
{In our framework, it is known that the so-called Steklov spectrum is nothing but a discrete sequence satisfying
$$
0=\sigma_0(\O)\leq \sigma_1(\Omega) \leq \sigma_2(\Omega)\leq  \ldots \nearrow +\infty.
$$}
We also point out that each (Steklov) eigenvalue can be computed {through the usual min-max formula:}
\begin{equation}\label{eigen3}
\sigma_k(\O)=\underset{S\in\mathcal{S}_{k+1}}{\min}\,{\underset{v\in S\setminus \{0\}}{\max}\,
\frac{\int_\O |\nabla v|^2 dx}{\int_{\p\O} v^2 ds}} = \underset{v\in [1,u_1,\ldots,u_{k-1}]^\perp}{\min} \,
\frac{\int_\O |\nabla v|^2 dx}{\int_{\p\O} v^2 ds},
\end{equation}
where $\mathcal{S}_{k+1}$ denotes the set of subspaces of dimension $k+1$ of $H^1(\O)$.

\medskip
In this work, we are interested in the maximization of $\sigma_k(\O)$ with a diameter constraint on the set $\O$: 
\begin{equation}
\label{maxD}
\max_{\Omega\in\mathcal{C},D(\Omega)=d_{0}}\sigma_{k}(\Omega),
\end{equation}
for a suitable class $\mathcal{C}$ of open sets in $\mathbb{R}^d$ and where  $D(\Omega)$ denotes the diameter of the open set $\Omega$. Thanks to the  { positive } homogeneity of the Steklov eigenvalues
(i.e., $\sigma_k(t\O)=\sigma_k(\O)/t$ for every real $t>0$) we can replace the problem \eqref{maxD} by any of { the following ones:}
\begin{equation}
\label{maxD1}
\max_{\Omega\in\mathcal{C},D(\Omega)\geq d_{0}}\sigma_{k}(\Omega)
\end{equation}
and
\begin{equation}\label{maxD2}
    \max_{\O\in\mathcal{C}}D(\O)\sigma_{k}(\Omega).
\end{equation}
More precisely, problems \eqref{maxD} and \eqref{maxD1} have the same set of solutions while \eqref{maxD} and \eqref{maxD2}
are equivalent (that is, any solution of one problem is a solution to the other one up to a suitable dilatation).

{The study of the maximization problem for Steklov eigenvalues under a diameter constraint is quite natural in view of the work \cite{BBG}. Indeed, B. Bogosel, D. Bucur and A. Giacomini have established (\cite[Proposition 4.3]{BBG})  an isodiametric control
for Steklov eigenvalues, namely the existence of a positive constant $C(d)$ (depending only on the dimension $d$) such that for every (smooth, bounded and connected) open set $\O\subset \mathbb{R}^d$
\begin{equation}\label{contBBG}
D(\O) \sigma_k(\O) \leq C(d) k^{\frac{2}{d}+1} \quad k=1,2,\ldots
\end{equation}
It should be noted that the problem  \eqref{maxD2} merely reduces in finding the 
optimal upper bound in the estimate \eqref{contBBG}.}

{Besides the later general result, a particular attention has been devoted over the years to the case $k=1$, i.e., to the first (non-trivial) Steklov eigenvalue  $\sigma_1(\O)$.} In 1954, R. Weinstock 
(\cite{Wein}) proved that the disk maximizes $\sigma_1$ among simply connected plane domains of {a given perimeter}.
In fact, for such a maximization problem, the diameter constraint is stronger than the perimeter constraint (itself stronger than the volume constraint)
in the sense that if we {show} that the ball maximizes $\sigma_1$ with a diameter constraint, it would { entail}
that it { also} maximizes $\sigma_1$ with a perimeter constraint and then implies Weinstock's result for simply
connected plane domains. Very surprisingly, we establish in any dimension (see Theorem \ref{notball}) that the ball
is {\bf never a maximizer of $\sigma_1$} under  a diameter constraint.

For the sake of completeness, let us mention that
F. Brock in \cite{Bro}  has proved that the ball in $\mathbb{R}^d$ is always a maximizer of $\sigma_1(\O)$ with
{\it a volume constraint}.  A. Girouard and I. Polterovich {(\cite{GP1})} observed that the disk is not a maximizer in the plane under a {\it perimeter constraint} whenever we remove the simple connectedness assumption: an annulus with a small inner radius provides a better value than the disk.
Nevertheless, recently in \cite{BFNT}, D. Bucur, V. Ferone, C. Nitsch and C. Trombetti extended Weinstock's
result to convex domains in $\re^d$ proving that the ball maximizes $\sigma_1$ with a perimeter constraint
among convex domains. 

{The paper is organized as follows:} in Section \ref{secexi}, we give an estimate of $\sigma_k(\O)$ for a convex domain in terms of its diameter and volume and
we prove existence of an optimal convex domain. Let us mention here that we do not address the question of regularity which seems to be very difficult as it is often the case for such problems.
Assuming regularity of the optimal set, we recall the shape derivative of the Steklov eigenvalue and the shape
derivative of the diameter which will be useful for the numerical simulation provided in Section \ref{secnum} in order
to perform some gradient-type algorithm. Section \ref{secqua} is devoted to qualitative results. 
First, we show that the ball is never a maximizer for $\sigma_k$ with $k=1,2...$. Then, we state and prove (see Theorem \ref{double}) that
a (regular) optimal domain in the {plane} has necessarily a multiple eigenvalue. This is an important result which is suspected to hold for most optimization problems related to eigenvalues. To the best of our knowledge, Theorem \ref{double} provides
the first proof of such a multiplicity property. However, the result still remains a conjecture in other situations (see, e.g., \cite[Open problem 1]{H1}).
At last, we illustrate our theoretical results in Section \ref{secnum}  by giving some optimal domains in the plane thanks to a numerical algorithm.

\section{Existence, optimality conditions}\label{secexi}
\subsection{Existence}
To prove the existence of a maximizer, we will use the classical method of calculus of variations.
Compactness of any class of open sets is almost for free when we work with a diameter constraint, since, by translation invariance, we can assume that our maximizing
sequence lies in a given ball and then, by \cite[Theorem 2.2.25]{HP} we can extract
a subsequence converging with respect to the Hausdorff metric to some open set.
Now, we have to deal with two major issues:
\begin{enumerate}
\item  In general, the diameter is not (sequentially) continuous for the Hausdorff convergence of open sets, see e.g. \cite[Figure 2.4]{HP}.
\item The continuity of Steklov eigenvalues requires additional assumptions as uniform regularity (see \cite{bog1} for the use of the so-called $\varepsilon$-cone property) or a uniform control of norm of the trace operator (see \cite{BGT}). Let us note that we 
can also work in a relaxed setting as in \cite{BBG}.
\end{enumerate}

The two above remarks naturally lead us to work in the setting of convex domains, that is,
$$
\mathcal{C}_d:=\{\Omega\subset \mathbb{R}^d : \Omega\: \mbox{open and convex}, D(\Omega)=d_0\},
$$
where $d_0\geq 0$ is fixed. It is well known (see, e.g., \cite{HP}) that the convexity property is preserved by the Hausdorff convergence. Now, let us assume that a sequence $(\Omega_n)_{n\geq 1}$
of open convex sets of diameter $d_0$ converges to a convex open set $\Omega$
which is nonempty. We are going to prove that $D(\Omega)=d_0$. 

Fix any real number $\varepsilon>0$. Choose two points $x,y\in \Omega$ such that $|x-y|>D(\Omega)-\varepsilon$. 
By virtue of \cite[Proposition 2.2.17]{HP}, we know that the points $x,y\in \Omega_n$ for $n\in \mathbb{N}$ large
enough. Therefore, we see that
$$
\liminf_{n\rightarrow \infty} D(\Omega_n) \geq \liminf_{n\rightarrow \infty} |x-y|\geq D(\Omega)-\varepsilon.
$$
Then, letting $\varepsilon \downarrow 0$ gives the estimate
\begin{equation}
\liminf_{n\rightarrow \infty} D(\Omega_n) \geq D(\Omega).
\label{Lim-Inf}
\end{equation}
Now, fix some increasing function $s:\mathbb{N}\rightarrow\mathbb{N}$
such that
$$
\limsup_{n\rightarrow\infty}D(\Omega_{n})=\lim_{n\rightarrow\infty}D(\Omega_{s(n)}).
$$
For each integer $n\geq1$, let us choose $x_{s(n)},y_{s(n)}\in$
such that $|x_{s(n)}-y_{s(n)}|\geq D(\Omega_{s(n)})-1/s(n)$. Keeping in mind that
$(\Omega_{s(n)})_{n\geq1}$ is a sequence of convex sets contained in a fixed ball
$B$, we can write 
\[
[x_{s(n)},y_{s(n)}]\subset\Omega_{s(n)}\subset B\quad\text{for all}\:n\geq1.
\]
Hence, there is no loss of generality to assume that $x_{s(n)}\rightarrow x$
and $y_{s(n)}\rightarrow y$ for some $x,y\in \mathbb{R}^d$. Since the Hausdorff convergence preserves
the inclusion, we must have $x,y\in\Omega$. Thus, we arrive to the inequality
\begin{equation}
\label{Lim-Sup}
\limsup_{n\rightarrow\infty}D(\Omega_{n})=\lim_{n\rightarrow\infty}D(\Omega_{s(n)})\leq|x-y|\leq D(\Omega).
\end{equation}
It remains to put together \eqref{Lim-Inf} and \eqref{Lim-Sup} to get
$$
\lim_{n\rightarrow\infty}D(\Omega_{n})=D(\Omega).
$$

We are now in position to prove the following result:

\begin{theorem}
For any $k\geq 1$, the problem
$$\max_{\Omega\in \mathcal{C}} \sigma_k(\Omega)$$
has a solution.
\end{theorem}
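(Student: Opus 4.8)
The plan is to use the direct method of the calculus of variations, following the compactness discussion already set up in the excerpt. Writing $M_k:=\sup_{\Omega\in\mathcal{C}_d}\sigma_k(\Omega)$, the first task is to show this supremum is finite: this follows immediately from the isodiametric control \eqref{contBBG} of \cite{BBG}, since every $\Omega\in\mathcal{C}_d$ has $D(\Omega)=d_0$, giving $\sigma_k(\Omega)\le C(d)k^{2/d+1}/d_0$. Hence $M_k<+\infty$ and we may pick a maximizing sequence $(\Omega_n)_{n\ge1}\subset\mathcal{C}_d$ with $\sigma_k(\Omega_n)\to M_k$. Note $M_k>0$, so we may discard the trivial case.

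Next I would address compactness. By translation invariance of both the diameter and the Steklov spectrum, each $\Omega_n$ may be translated so that it is contained in a fixed closed ball $B$ of radius $d_0$ (any convex set of diameter $d_0$ fits in such a ball once one of its points is moved to the origin). By \cite[Theorem 2.2.25]{HP} we extract a subsequence, still denoted $(\Omega_n)$, converging in the Hausdorff metric to a compact convex set; convexity is preserved under Hausdorff convergence. The one delicate point is to rule out degeneracy of the limit, i.e. that the limit is not empty (or lower-dimensional in a way that destroys the Steklov problem). Since $d_0>0$ each $\Omega_n$ contains a segment of length $d_0$, and the endpoints, up to a further subsequence, converge to two points at distance $d_0$; by preservation of inclusion under Hausdorff convergence the limiting segment lies in the limit set, so the limit $\Omega$ is a convex set with nonempty interior and, by the argument already carried out in the excerpt, $D(\Omega)=d_0$. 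Thus $\Omega\in\mathcal{C}_d$.

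The main obstacle is the continuity of $\sigma_k$ along the sequence, as flagged in remark (2) of the excerpt. This is where the restriction to convex sets does the essential work: a Hausdorff-convergent sequence of convex sets contained in a fixed ball, with nondegenerate limit, automatically satisfies a uniform cone condition (a uniform $\varepsilon$-cone property, for $\varepsilon$ depending only on $d_0$ and $B$), since a convex body of diameter $d_0$ with nonempty interior has, near each boundary point, an interior cone of a controlled opening. Under such uniform regularity the Steklov eigenvalues are continuous for the Hausdorff convergence — this is precisely the content of \cite{bog1} (alternatively one invokes the uniform bound on the trace operator norm from \cite{BGT}, which again holds uniformly over such a family of convex bodies). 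Consequently $\sigma_k(\Omega_n)\to\sigma_k(\Omega)$, and combined with $\sigma_k(\Omega_n)\to M_k$ we conclude $\sigma_k(\Omega)=M_k$. Hence $\Omega$ is a maximizer, which proves the theorem. One should take a little care that the $\varepsilon$-cone opening indeed degenerates only if the inradius of $\Omega_n$ tends to zero; excluding this is exactly the nondegeneracy of the limit established above, so no separate argument is needed.
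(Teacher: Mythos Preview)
Your compactness and continuity setup is broadly correct, but there is a genuine gap at the nondegeneracy step. You argue that each $\Omega_n$ contains a segment of length $d_0$, that the endpoints subconverge, and hence that the Hausdorff limit contains a segment of length $d_0$; from this you conclude ``the limit $\Omega$ is a convex set with nonempty interior''. That inference is unjustified: a segment \emph{is} a convex set of diameter $d_0$ with empty interior. Concretely, take ellipsoids with one axis of length $d_0$ and all others of length $1/n$; they Hausdorff-converge to a segment. Nothing in your argument excludes this collapse, and your later remark that ``excluding this is exactly the nondegeneracy of the limit established above'' is circular --- what you established is only that the limit has diameter $d_0$, not that its inradius is bounded away from zero.

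This matters because the uniform $\varepsilon$-cone property you invoke for the continuity of $\sigma_k$ depends precisely on a lower bound for the inradius (equivalently on a fixed ball contained in all $\Omega_n$, cf.\ \cite[Proposition 2.4.4]{HP}); it does \emph{not} follow from $d_0$ and the enclosing ball alone. If the maximizing sequence collapses, there is no uniform cone and the continuity argument breaks down. The paper closes this gap by proving, for convex $\Omega$,
\[
\sigma_k(\Omega)\ \le\ C(d,k)\,\frac{|\Omega|^{1/(d-1)}}{D(\Omega)^{(2d-1)/(d-1)}},
\]
so that if $D(\Omega_n)=d_0$ and $|\Omega_n|\to 0$ then $\sigma_k(\Omega_n)\to 0$, contradicting the maximizing property. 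This is the missing ingredient: you need a quantitative reason, tied to $\sigma_k$ itself, why the maximizing sequence cannot lose volume. The isodiametric bound \eqref{contBBG} you quote gives finiteness of the supremum but goes the wrong way for this purpose.
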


\begin{proof}
The proof will follow the same lines as in \cite{bog1}. Let $(\Omega_n)_{n\geq 1}$ be a maximizing sequence
(of open convex sets with diameter $d_0$). We recall that, by translation invariance, we can assume that the
sequence $(\Omega_n)_{n\geq 1}$ lies in a given ball and then, by \cite[Theorem 2.2.25]{HP} we can extract
a subsequence converging with respect to the Hausdorff metric to some open set.
There are two possibilities:
\begin{enumerate}
\item there is a subsequence that converges
to a (nonempty) open convex set $\Omega$. Moreover, by the continuity property proved above $D(\Omega)=d_0$;
\item the sequence $(\Omega_n)_{n\geq 1}$ converges to the empty set. This means that it shrinks to a  convex body of dimension at most $d-1$ and $|\Omega_n| \to 0$.
\end{enumerate}

Due to the maximizing property of $(\Omega_n)_{n\geq 1}$, the case $(2)$ cannot occur. In fact, we are going to prove that if $D(\Omega_n) = d_0$ and $|\Omega_n| \to 0$, then 
$\sigma_k(\Omega_n) \to 0$. In order to prove such a convergence result, the following proposition will be needed. It provides an estimate which can be seen a counterpart of the classical estimate of
Steklov eigenvalues by Colbois, El Soufi,
Girouard in terms of the isoperimetric ratio (see \cite{CEG}). For convex domains,
we are able to get a more precise estimate involving volume and diameter.

\begin{proposition}
Let $\Omega$ be a convex domain of diameter $D(\Omega)=D$ in $\mathbb{R}^d$. Then, there exists an explicit constant $C=C(d,k)$ depending
only on the dimension $d$ and on $k$ such that
$$
\sigma_k(\Omega)\leq C \frac{|\Omega|^\frac{1}{d-1}}{D^{\frac{2d-1}{d-1}}}.
$$
\end{proposition}
\begin{proof}[Proof of the proposition]
We will proceed as in the proof of \cite[Proposition 4.2]{bog1}. Let us denote $D:=D(\Omega)$. Pick any diameter $\rho$ of $\Omega$. We associate to it a set $\Omega_0$ (called \emph{region}) which is defined as the part of $\Omega$ contained between two hyperplanes orthogonal to the diameter $\rho$. The width of the region is denoted by $L$.

{\bf Step 1.} Following Part 1 of the proof of \cite[Proposition 4.2]{bog1}, we can get through elementary geometric arguments, the following estimate
\begin{equation}\label{es1}
|\Omega_0| \geq \frac{L^d}{D^d} |\Omega|.
\end{equation}
More precisely, the basic idea to get the latter inequality is to make a comparison with a cone: the smallest
volume for a portion of a cone is near its vertex for which we get exactly this estimate.

{\bf Step 2.} We also need a lower bound of the (lateral) perimeter of the region $\Omega_0$.
To that purpose, we first perform a Steiner symmetrization $\Omega_0^*$ of $\Omega_0$ with respect to the direction of the choosen diameter. This preserves the volume and  
decreases the perimeter. Also, all sections of $\Omega_0^*$ orthogonal to the diameter are $(d-1)$-dimensional balls. Among these ones, pick the one of maximal radius $r$. 
Obviously, the cylinder of radius $r$ and height $L$ contains the region $\Omega_0^*$, so its volume given by $\omega_{d-1} Lr^{d-1}$ is greater than $|\Omega_0^*|$. Here and below, 
$\omega_k$ denotes the volume of the unit ball in $\mathbb{R}^k$.
Using \eqref{es1}, this allows us to obtain a lower bound for $r$
$$
r^{d-1} \geq  \frac{L^{d-1}|\Omega|}{\omega_{d-1} D^d},
$$
in particular,
\begin{equation}\label{es2}
r \geq  L \left(\frac{|\Omega|}{\omega_{d-1}D^d}\right)^{1/(d-1)}.
\end{equation}

On the other hand, note that we can always include in $\Omega_0^*$ two cones with basis balls of radius $r$ and heights which sum up to $L$. The perimeter of convex sets is monotone with respect to 
inclusion (see, e.g., \cite[Lemma 2.2.2.]{bubu}) therefore the (lateral) perimeter of 
$\Omega_0^*$ can be bounded from below by the sum of the ones for the two cones, and a lower bound of the following form can be found:
\[ 
P(\Omega_0^*) \geq \frac{\omega_{d-2}}{d-1}\, L r^{d-2}.
 \]
Using  \eqref{es2} we arrive to
\begin{equation}\label{es3}
P(\Omega_0) \geq \frac{\omega_{d-2}}{d-1}\, L^{d-1}
\left(\frac{|\Omega|}{\omega_{d-1} D^d}\right)^{(d-2)/(d-1)}.
\end{equation}

{\bf Step 3.} Finally we obtain an upper bound for the Steklov eigenvalues by using
the min-max formula \eqref{eigen3}. Assume that the diameter is in the direction of the first coordinate $x_1$.
Let us divide the diameter $D$ of $\Omega$
into $k+1$ equal parts and build a test function $u_i$, depending only on $x_1$ in the region defined by each one of these segments. 

In a segment $S_i$ of length $D/(k+1)$ consider $a = D/(4(k+1))$ and define 
the function $u_i$ piecewise affine as follows: 
\begin{itemize}
	\item on the segment of length $2a$ whose midpoint coincides with the middle
	of $S_i$ define $u_i=1$.
	\item on the outer segments of length $a$ let the function $u_i$  goes to zero with gradient $1/a$.
\end{itemize}

Now, let us estimate the Rayleigh quotient associated to $u_i$:
\begin{itemize}
	\item $\int_\Omega |\nabla u_i|^2 \leq \frac{1}{a^2}|\Omega|$.
	\item $\int_{\partial \Omega} u_i^2 \geq P(\{u_i=1\})$. Since the set $\{u_i=1\}$ is a \emph{region} of width $2a$, we can use the estimate \eqref{es3} to get
	\[
	 P(\{u_i=1\}) \geq C_d (2a)^{d-1}\left( \frac{|\Omega|}{D^d}\right)^\frac{d-2}{d-1},
	\]
\end{itemize}
with $C_d:=\omega_{d-2}/(d-1)\omega_{d-1}^{(d-2)/(d-1)}$.
Therefore, we have
\[ 
\frac{\int_\Omega |\nabla u_i|^2}{\int_{\partial \Omega} u_i^2} \leq 
\frac{[2(k+1)]^{d+1} |\Omega|^{\frac{1}{d-1}}}{4C_d D^{\frac{2d-1}{d-1}}}.
 \]
Since we can construct $k+1$ such functions with disjoint supports in $\Omega$, we conclude that this also gives an upper bound for $\sigma_k(\Omega)$. 
\end{proof}

Now, let us come back to the proof of the existence result. We have established (thanks to the latter
proposition) that the maximizing sequence $(\Omega_n)_{n\geq 1}$ converges (up to a subsequence) with respect to the Hausdorff distance to some open set $\Omega$. Since the convexity and the diameter are
preserved, $\Omega$ belongs to the class $\mathcal{C}$. Let $B$ be a (compact) ball
included in $\Omega$. By \cite[Proposition 2.2.17]{HP}, $B$ is also included into
$\Omega_n$ for $n$ large enough. By \cite[Proposition 2.4.4]{HP}, all the sets
$\Omega_n$ and $\Omega$ satisfy the $\varepsilon$-cone property with the same constant
$\varepsilon$ (related to this ball $B$). Moreover, we also have (due to the convexity) the convergence of the involved perimeters, i.e.,
$P(\Omega_n)\to P(\Omega)$.
Thus, by \cite[Theorem 3.5]{bog1}, we have $\sigma_k(\Omega_n) \to \sigma_k(\Omega)$
and the existence follows.
\end{proof}

\medskip\noindent
\begin{remark}
The diameter constraint is, in some sense, more flexible that the volume or the perimeter
constraint. Let us illustrate this by considering a domain $\Omega$ with holes (that is, the complement $\mathbb{R}^d\setminus \Omega$ is disconnected). Filling those holes does not modify the diameter but it would {\bf increase} the associated Steklov eigenvalues (this can be seen through the Rayleigh quotient of any test function: the numerator will increase while the denominator will decrease). As a consequence, there is no loss of generality to state the maximization problem on the class of domains without holes (i.e., simply connected domains in the plane). Nevertheless, the existence
of a maximizer is far being clear in such a class.
\end{remark}

\subsection{Derivative of Steklov eigenvalues}
We are interested in writing optimality conditions for our maximization problem involving a diameter constraint. For that purpose, we use the classical notion of
shape derivative (see, e.g., \cite[Chapter 5]{HP} for more details on that concept). The theorem below gives the formulae for the shape derivative of Steklov eigenvalues. It is a particular case of a more general result which appears in the paper by Dambrine, Kateb, Lamboley \cite{DKL} devoted to the so-called
Wentzell operator and its eigenvalues.

\begin{theorem}\label{derisigma}  Let $\Omega$ be a nonempty open bounded set of class $C^3$. The following hold for any $V\in W^{3,\infty}(\Omega,\mathbb{R}^{d})$.\\
$(a)$ If $\sigma_{k}:=\sigma_{k}(\Omega)$ is a simple eigenvalue
of the Steklov problem, then the application $t\mapsto\sigma_{k}(t):=\sigma_{k}(\Omega_{t})$
(where as usal $\Omega_{t}:=(I+tV)(\Omega))$ is differentiable
and the derivative at $0$ is
\[
(\sigma_{k})'(0)=\int_{\partial\Omega}(\left|\nabla_{\tau}u\right|^{2}-\left|\frac{\partial u}{\partial n}\right|^{2}-\sigma_{k}H\left|u\right|^{2})V.n,
\]
where $u(\cdot)$ is the normalized (Steklov) eigenfunction associated
to $\sigma_{k}$.\\
$(b)$ Let $(u_{k})_{1\leq k\leq m}$ be the family of (Steklov) eigenfunctions
associated to a multiple eigenvalue $\sigma$ of order $m\geq2$.
Then, there exists $m$ functions $t\mapsto\sigma_{k}(t)$ defined
in a neighborhood of $0$ such that
\begin{enumerate}
\item $\sigma_{k}(0)=\sigma_{k}$;
\item For every $t$ near $0$, $\sigma_{k}(t)$ is an eigenvalue of $\Omega_{t}:=(I+tV)(\Omega)$;
\item The functions $t\mapsto\sigma_{k}(t)$ admit derivatives and their
values at $0$ are eigenvalues of the matrix $M=(M_{i,j})_{1\leq i,j\leq m}$
defined by
\[
M_{i,j}=\int_{\partial\Omega}(\nabla_{\tau}u_{i}.\nabla_{\tau}u_{j}-\frac{\partial u_{i}}{\partial n}\frac{\partial u_{j}}{\partial n}-\sigma Hu_{i}u_{j})V.n
\]
\end{enumerate}
\end{theorem}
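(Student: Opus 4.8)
The statement is a special instance of the shape-derivative formulae for Wentzell eigenvalues proved in \cite{DKL}, so one may simply invoke that reference; below I sketch a direct argument adapted to the Steklov problem.

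\emph{Setup and differentiability.} Fix $V\in W^{3,\infty}(\Omega,\re^{d})$ and set $\Phi_{t}:=I+tV$, a diffeomorphism of $\Omega$ onto $\Omega_{t}$ for $|t|$ small. Transporting the weak formulation back to the fixed domain via $\tilde u_{t}:=u_{t}\circ\Phi_{t}$ turns the eigenvalue problem on $\Omega_{t}$ into a family of weak eigenvalue problems $a_{t}(\tilde u_{t},v)=\sigma_{k}(t)\,b_{t}(\tilde u_{t},v)$ on $H^{1}(\Omega)$, where $a_{t}$ and $b_{t}$ are obtained from $\int_{\Omega}\nabla u\cdot\nabla v\,dx$ and $\int_{\p\Omega}uv\,ds$ by inserting the Jacobian factors of $\Phi_{t}$ and of its boundary restriction; these depend analytically on $t$ near $0$, with $a_{0},b_{0}$ the original forms. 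Self-adjoint analytic perturbation theory (Kato--Rellich, for analytic families of forms, $b_{t}$ being compact on $H^{1}(\Omega)$) then applies: in case $(a)$, $\sigma_{k}$ being simple, one gets a real-analytic branch $t\mapsto\sigma_{k}(t)$ and a real-analytic branch of normalized eigenfunctions; equivalently the shape derivative $u'$ exists in $H^{1}(\Omega)$, is harmonic in $\Omega$, and satisfies a Steklov-type boundary condition with an inhomogeneous term. In case $(b)$, the same theory applied to the $m$-dimensional spectral projection produces the $m$ branches $\sigma_{1}(t),\dots,\sigma_{m}(t)$ of items (1)--(2) and corresponding analytic eigenfunction branches. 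The $C^{3}$ regularity of $\Omega$ guarantees, through elliptic regularity, that the eigenfunctions and all boundary integrands below are well defined pointwise.

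\emph{Computation of the derivative in case $(a)$.} Taking $v=u_{t}$ in the weak formulation on $\Omega_{t}$ and using $\int_{\p\Omega_{t}}u_{t}^{2}\,ds=1$ gives $\sigma_{k}(t)=\int_{\Omega_{t}}|\nabla u_{t}|^{2}\,dx$. Differentiating with Hadamard's formula for domain integrals,
\[
(\sigma_{k})'(0)=2\int_{\Omega}\nabla u\cdot\nabla u'\,dx+\int_{\p\Omega}|\nabla u|^{2}\,V.n\,ds .
\]
Since $u'\in H^{1}(\Omega)$, the weak formulation at $t=0$ with test function $u'$ gives $\int_{\Omega}\nabla u\cdot\nabla u'\,dx=\sigma_{k}\int_{\p\Omega}uu'\,ds$. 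Differentiating the normalization $\int_{\p\Omega_{t}}u_{t}^{2}\,ds=1$ with Hadamard's formula for boundary integrals, and using $\p u/\p n=\sigma_{k}u$, gives
\[
0=2\int_{\p\Omega}uu'\,ds+\int_{\p\Omega}\bigl(2\sigma_{k}u^{2}+Hu^{2}\bigr)V.n\,ds ,
\]
where $H$ is the sum of the principal curvatures. Substituting, and simplifying with $|\nabla u|^{2}=|\nabla_{\tau}u|^{2}+|\p u/\p n|^{2}$ and $\p u/\p n=\sigma_{k}u$ so that $|\nabla u|^{2}-2\sigma_{k}^{2}u^{2}=|\nabla_{\tau}u|^{2}-|\p u/\p n|^{2}$, collapses everything to
\[
(\sigma_{k})'(0)=\int_{\p\Omega}\Bigl(|\nabla_{\tau}u|^{2}-\bigl|\tfrac{\p u}{\p n}\bigr|^{2}-\sigma_{k}H|u|^{2}\Bigr)V.n\,ds .
\]

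\emph{Case $(b)$ and the main obstacle.} In the multiple case, the eigenfunction branches of the $m$-dimensional eigenspace can be chosen with $\int_{\p\Omega_{t}}u_{i,t}u_{j,t}\,ds=\delta_{ij}$; running the same computation with the bilinear pairing (that is, polarizing the identity above) shows that the perturbation operator induced on the eigenspace is represented by the matrix $M$ of the statement, and the Kato--Rellich theory identifies the $(\sigma_{k})'(0)$ with the eigenvalues of $M$. The only genuinely delicate point is the differentiability step: verifying that the transported forms $a_{t},b_{t}$ are analytic on a fixed Hilbert space and that the eigenbranches — and, crucially, the eigenfunctions, which are needed as test functions in the computation above — inherit this regularity, with enough boundary smoothness (supplied by the $C^{3}$ hypothesis) to legitimize the pointwise boundary identities. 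Once this is secured, the passage to the Hadamard boundary form is routine tangential calculus.
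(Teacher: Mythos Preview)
Your proposal is correct and in fact goes further than the paper itself: the paper does not give a proof of this theorem at all, stating only that it is a particular case of the more general Wentzell result of Dambrine, Kateb and Lamboley \cite{DKL}. You open with exactly this reference, which is all the paper provides, and then supply a self-contained sketch whose computation (Hadamard differentiation of the Rayleigh quotient, elimination of $\int_{\p\Omega}uu'$ via the differentiated normalization, and the algebraic simplification $|\nabla u|^{2}-2\sigma_{k}^{2}u^{2}=|\nabla_{\tau}u|^{2}-|\p u/\p n|^{2}$) is sound and standard; the appeal to Kato--Rellich for the multiple case is likewise the appropriate tool and is what underlies the argument in \cite{DKL}.
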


\subsection{Shape derivatives of the diameter}\label{secderdiam}

The development of optimality conditions for our maximization problem also requires the shape derivative of the diameter. This is the aim of what follows. 

We work here in the context of a general (real) normed space $(X,\left\Vert \cdot\right\Vert )$. Let us consider a multimapping (i.e., a set-valued mapping) $C:I\rightrightarrows X$ with bounded values
defined on a real interval $I:=[T_0,T]$ with $T_0<T$. We introduce the
function $\delta:I\rightarrow\mathbb{R}$ defined by
\[
\delta(t):=D(C(t)):=\sup_{(x,y)\in C(t)^{2}}\left\Vert x-y\right\Vert \quad\text{for all}\:t\in I.
\]
Observe first that the function $\delta(\cdot)$ has no differentiability properties in
general since for a given function $f:I\rightarrow\mathbb{R}_{+}$,
we obviously have
\[
\delta(t)=D([0,f(t)])=f(t)\quad\text{for all}\:t\in I.
\]
This leads to require some regularity  assumptions on the multimapping $C(\cdot)$. Assume that $C(\cdot)$ is $\gamma$-Lipschitz relative to the Hausdorff
distance for some real $\gamma\geq0$, i.e.,
\[
C(t)\subset C(s)+\gamma\left|t-s\right|\mathbb{B}\quad\text{for all}\:s,t\in I,
\]
where $\mathbb{B}$ stands for the closed unit ball of $(X,\left\Vert \cdot\right\Vert )$.
Such an hypothesis entails for every $s,t\in I$,
\begin{flalign*}
\delta(t)=\sup_{x,y\in C(t)}\left\Vert x-y\right\Vert  & \leq\sup_{x,y\in C(s)+\gamma\left|t-s\right|\mathbb{B}}\left\Vert x-y\right\Vert \\
 & =\sup_{x,y\in C(s),b_{1},b_{2}\in\mathbb{B}}\left\Vert x-y+\gamma\left|t-s\right|(b_{1}-b_{2})\right\Vert \\
 & \leq2\gamma\left|t-s\right|+\sup_{x,y\in C(s)}\left\Vert x-y\right\Vert \\
 & =2\gamma\left|t-s\right|+\delta(s),
\end{flalign*}
hence the mapping $\delta(\cdot)$ is $2\gamma$-Lipschitz
continuous on $I$. In particular, if $\mathrm{dim}\,X<\infty$, Rademacher's theorem says that $\delta(\cdot)$ is almost everywhere differentiable
on $I$.\\

Coming back to shape optimization, we are going to assume that  $0\in \mathrm{int}\,I$ along with
\[
C(t):=\left\{ x+tV(x):x\in \Omega\right\}=(\mathrm{Id}_X+tV)(\Omega)=:\Omega_t \quad\text{for all}\:t\in I,
\]
where $\mathrm{Id}_X$ denotes the identity mapping on $X$ and where $\Omega$ is a given nonempty relatively compact subset of $X$ and $V:\Omega\rightarrow X$
is a bounded continuous mapping. Writing 
$$
x+tV(x)=x+sV(x)+(t-s)V(x) \quad \text{for all}\:x\in \Omega,\: \text{all}\: t,s\in I
$$
we then see
\[
\Omega_t\subset \Omega_s+\sup_{x\in \Omega}\left\Vert V(x)\right\Vert \left|t-s\right|\mathbb{B}.
\]
According to what precedes, we know that the diameter $\delta(\cdot)$ is differentiable  almost everywhere on $I$ whenever $X=\mathbb{R}^d$. Besides the latter differentiability property, we are going to establish the existence of the one-sided limit
\[
\lim_{t\downarrow0}\frac{\delta(t)-\delta(0)}{t}.
\]
This amounts to say that $D(\cdot)$ has a shape derivative in the direction $V$. Let us introduce the set
of {\it diameter points} of the set $\Omega$:
\[
\mathcal{D}_{\Omega}:=\{ (x,y)\in\overline{\Omega}:\left\Vert x-y\right\Vert=D(\Omega) \}.
\]
As usual, here and below, $\overline{\Omega}$ denotes the closure of $\Omega$ in $X$. First, note that
\[
\overline{C(t)}=\left\{ x+tV(x):x\in\overline{\Omega}\right\} \quad\text{for all}\:t\in I.
\]
Fix any $(x_{0},y_{0})\in \mathcal{D}_{\Omega}$. We obviously have for any $t\in I$,
\begin{flalign*}
\frac{1}{2}\big(\delta^{2}(t)-\delta^{2}(0)\big) & \geq\frac{1}{2}\left\Vert x_{0}+tV(x_{0})-\big(y_{0}+tV(y_{0})\big)\right\Vert ^{2}-\frac{1}{2}\left\Vert x_{0}-y_{0}\right\Vert ^{2}\\
 & =t\left\langle x_{0}-y_{0},V(x_{0})-V(y_{0})\right\rangle +\frac{t^{2}}{2}\left\Vert V(x_{0})-V(y_{0})\right\Vert ^{2},
\end{flalign*}
hence
\[
\liminf_{t\downarrow0}\frac{\delta^{2}(t)-\delta^{2}(0)}{2t}\geq\left\langle x_{0}-y_{0},V(x_{0})-V(y_{0})\right\rangle .
\]
Since $(x_{0},y_{0})$ has been arbitrarily chosen in the set $\mathcal{D}_{\Omega}$,
we get
\[
\liminf_{t\downarrow0}\frac{\delta^{2}(t)-\delta^{2}(0)}{2t}\geq\sup_{(x,y)\in\mathcal{D}_{\Omega}}\left\langle x-y,V(x)-V(y)\right\rangle .
\]
Now, let $(t_{n})_{n\geq1}$ be a sequence of positive real numbers such
that $t_{n}\rightarrow0$ and
\[
\limsup_{t\downarrow0}\frac{\delta^{2}(t)-\delta^{2}(0)}{2t}=\lim_{n\rightarrow\infty}\frac{\delta^{2}(t_{n})-\delta^{2}(0)}{2t_{n}}.
\]
For each integer $n\geq1$, pick any $x_{n},y_{n}\in C(t_{n})$ such
that
\[
\delta^{2}(t_{n})-t_{n}^{2}<\left\Vert x_{n}-y_{n}\right\Vert ^{2}\leq\delta^{2}(t_{n}).
\]
According to the definition of $C(\cdot)$, we may write for every
integer $n\geq1$, $x_{n}=u_{n}+t_{n}V(u_{n})$ and $y_{n}=v_{n}+t_{n}V(v_{n})$
for some $u_{n},v_{n}\in \Omega$. From the compactness of $\overline{\Omega}$,
we may suppose without loss of generality that $u_{n}\rightarrow u$
and $v_{n}\rightarrow v$ for some $u,v\in\overline{\Omega}$.  It is straightforward to check that
\begin{flalign*}
\delta^{2}(t_n)-\delta^{2}(0) & <\left\Vert x_{n}-y_{n}\right\Vert ^{2}+t_{n}^{2}-\left\Vert u_{n}-v_{n}\right\Vert ^{2}\\
 & <2t_{n}\left\langle u_{n}-v_{n},V(u_{n})-V(v_{n})\right\rangle +t_{n}^{2}\left\Vert V(u_{n})-V(v_{n})\right\Vert ^{2}+t_{n}^{2},
\end{flalign*}
in particular
\[
\limsup_{t\downarrow0}\frac{\delta^{2}(t)-\delta^{2}(0)}{2t}\leq\left\langle u-v,V(u)-V(v)\right\rangle.
\]
We claim that $(u,v)\in\mathcal{D}_{\Omega}$. Indeed, we have
\[
\delta^{2}(t_{n})\geq\left\Vert x-y+t_{n}\big(V(x)-V(y)\big)\right\Vert ^{2} \quad \text{for all}\:(x,y)\in \mathcal{D}_{\Omega},\:\text{all}\:n\geq 1,
\]
which gives the inequality
$$
\liminf_{n\rightarrow\infty}\delta^{2}(t_{n})\geq\delta^{2}(0)=D(\Omega)^2
$$
and
\[
\delta^{2}(0)=\liminf_{n\rightarrow\infty}(\delta^{2}(t_{n})-t_{n}^{2})\leq\liminf_{n\rightarrow\infty}\left\Vert x_{n}-y_{n}\right\Vert ^{2}=\lim_{n\rightarrow\infty}\left\Vert u_{n}-v_{n}+t_{n}(V(u_{n})-V(v_{n}))\right\Vert ^{2}=\left\Vert u-v\right\Vert ^{2}.
\]
Putting what precedes together, we arrive to
\[
\sup_{(x,y)\in\mathcal{D}_{\Omega}}\left\langle x-y,V(x)-V(y)\right\rangle \leq\liminf_{t\downarrow0}\frac{\delta^{2}(t)-\delta^{2}(0)}{2t}\leq\limsup_{t\downarrow0}\frac{\delta^{2}(t)-\delta^{2}(0)}{2t}\leq\left\langle u-v,V(u)-V(v)\right\rangle 
\]
Consequently, the function $\frac{1}{2}\delta^{2}(\cdot)$ has a right
derivative at $0$ given by 
\[
\lim_{t\downarrow0}\frac{\delta^{2}(t)-\delta^{2}(0)}{2t}=\sup_{(x,y)\in\mathcal{D}_{\Omega}}\left\langle x-y,V(x)-V(y)\right\rangle .
\]

We summarize those features in the following proposition.
\begin{proposition} Let $\Omega$ be a nonempty open relatively compact subset of a real normed
space $(X,\|\cdot\|)$ and let $V:\Omega\rightarrow X$ be a bounded and continuous
mapping. Then, one has
\begin{equation}\label{derdiam}
\lim_{t\downarrow0}\frac{D(\Omega_{t})-D(\Omega)}{t}=\frac{1}{D(\Omega)}\sup_{(x,y)\in\mathcal{D}_{\Omega}}\left\langle x-y,V(x)-V(y)\right\rangle,
\end{equation}
where $\mathcal{D}_{\Omega}:=\left\{ (x,y)\in\overline{\Omega}:\left\Vert x-y\right\Vert =D(\Omega)\right\} $
and $\Omega_{t}:=\left\{x+tV(x):x\in \Omega\right\} $ for every $t>0$.
\end{proposition}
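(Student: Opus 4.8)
The plan is to work not with $\delta(t):=D(\Omega_t)$ directly but with $\tfrac{1}{2}\delta^2(t)$, because $\|x-y\|^2$ expands cleanly to first order in $t$ once $x,y$ are replaced by $x+tV(x),\,y+tV(y)$, whereas $\|x-y\|$ does not. Before anything else I would record, from the inclusion $\Omega_t\subset\Omega_s+\big(\sup_{\Omega}\|V\|\big)|t-s|\,\mathbb{B}$ established just above, that $\delta$ is $2\big(\sup_\Omega\|V\|\big)$-Lipschitz, hence continuous, and that $\delta(0)=D(\Omega)>0$ since $\Omega$ is open. This is what will let me pass, at the very end, from a one-sided derivative of $\tfrac12\delta^2$ at $0$ to the one-sided derivative of $\delta$ via the chain rule, producing the factor $1/D(\Omega)$ in the claimed formula \eqref{derdiam}. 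A minor technical point to dispose of along the way is the identity $\overline{C(t)}=(\mathrm{Id}_X+tV)(\overline{\Omega})$, which is where continuity of $V$ on $\overline\Omega$ is used and which justifies that the supremum defining $\delta(t)$ may be taken over $\overline{C(t)}^2$.

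For the lower bound I would fix an arbitrary diameter point $(x_0,y_0)\in\mathcal D_\Omega$, use $x_0+tV(x_0)$ and $y_0+tV(y_0)$ as competitors in $\delta(t)=\sup_{\overline{C(t)}^2}\|\cdot\|$, and expand to obtain
\[
\frac{1}{2}\delta^2(t)-\frac{1}{2}\delta^2(0)\ \geq\ t\,\langle x_0-y_0,\,V(x_0)-V(y_0)\rangle+\frac{t^2}{2}\,\|V(x_0)-V(y_0)\|^2 .
\]
Dividing by $t>0$, letting $t\downarrow0$, and then taking the supremum over $(x_0,y_0)\in\mathcal D_\Omega$ gives $\liminf_{t\downarrow0}\frac{\delta^2(t)-\delta^2(0)}{2t}\geq\sup_{(x,y)\in\mathcal D_\Omega}\langle x-y,\,V(x)-V(y)\rangle$. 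For the matching upper bound I would take a sequence $t_n\downarrow0$ realizing $\limsup_{t\downarrow0}\frac{\delta^2(t)-\delta^2(0)}{2t}$, pick near-maximizers $x_n,y_n\in\overline{C(t_n)}$ with $\delta^2(t_n)-t_n^2<\|x_n-y_n\|^2\leq\delta^2(t_n)$, write $x_n=u_n+t_nV(u_n)$, $y_n=v_n+t_nV(v_n)$ with $u_n,v_n\in\overline\Omega$, and use compactness of $\overline\Omega$ to extract $u_n\to u$, $v_n\to v$ in $\overline\Omega$. Expanding $\|x_n-y_n\|^2=\|u_n-v_n\|^2+2t_n\langle u_n-v_n,V(u_n)-V(v_n)\rangle+t_n^2\|V(u_n)-V(v_n)\|^2$, bounding $\|u_n-v_n\|^2\leq D(\Omega)^2=\delta^2(0)$, dividing by $2t_n$ and sending $n\to\infty$ (using $V$ bounded and continuous) yields $\limsup_{t\downarrow0}\frac{\delta^2(t)-\delta^2(0)}{2t}\leq\langle u-v,\,V(u)-V(v)\rangle$.

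The crux, and the step I expect to be the main obstacle, is then to show that the limit pair satisfies $(u,v)\in\mathcal D_\Omega$, i.e. $\|u-v\|=D(\Omega)$. Testing $\delta^2(t_n)$ against fixed diameter pairs gives $\liminf_n\delta^2(t_n)\geq\delta^2(0)$; combining this with $\delta^2(t_n)-t_n^2<\|x_n-y_n\|^2$ and $u,v\in\overline\Omega$ forces $\delta^2(0)=\liminf_n\big(\delta^2(t_n)-t_n^2\big)\leq\|u-v\|^2\leq D(\Omega)^2=\delta^2(0)$, hence equality throughout. Once $(u,v)\in\mathcal D_\Omega$, the right-hand side $\langle u-v,\,V(u)-V(v)\rangle$ is at most $\sup_{(x,y)\in\mathcal D_\Omega}\langle x-y,\,V(x)-V(y)\rangle$, which closes the sandwich: $\liminf$ and $\limsup$ coincide and equal that supremum, so $\tfrac12\delta^2$ has the stated right derivative at $0$. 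Dividing by $\delta(0)=D(\Omega)$ gives exactly \eqref{derdiam}, which completes the argument.
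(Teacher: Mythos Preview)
Your proposal is correct and follows essentially the same approach as the paper: you work with $\tfrac12\delta^2$, obtain the lower bound by testing with fixed diameter pairs, obtain the upper bound via a compactness argument on near-maximizers along a sequence $t_n\downarrow 0$ realizing the $\limsup$, and verify that the limit pair lies in $\mathcal D_\Omega$ exactly as the paper does. The only cosmetic differences are that you take near-maximizers in $\overline{C(t_n)}$ rather than $C(t_n)$ and make explicit the chain-rule step passing from $\tfrac12\delta^2$ to $\delta$, both of which are immaterial.
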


\section{Qualitative properties}\label{secqua}
\subsection{Case of the ball}

As recalled in the introduction, the ball maximizes the quantity $\sigma_1(\O)$ with a volume constraint (see
\cite{Bro}). It also maximizes $\sigma_1(\Omega)$ with a perimeter constraint among planar
simply connected domains (\cite{Wein}) and in any dimension among convex domains (\cite{BFNT}).
Therefore, it is quite natural to expect that the ball is also a maximizer (or at least a local maximizer) for $\sigma_1$ in the setting of a diameter constraint. The following theorem shows that it is not the case!

\begin{theorem}\label{notball}
The ball is not a local maximizer for Problem \eqref{maxD2} for $\sigma_1$ in any dimension.
\end{theorem}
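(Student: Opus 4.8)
The plan is to use the shape-derivative machinery to show that, starting from the ball, one can find a volume-preserving deformation that increases $\sigma_1$ while \emph{not increasing} the diameter (to first order), which contradicts local maximality for the scale-invariant functional $D(\Omega)\sigma_1(\Omega)$ in Problem \eqref{maxD2}. First I would recall the Steklov spectrum of the unit ball $B\subset\mathbb{R}^d$: the first non-trivial eigenvalue $\sigma_1(B)=1/R$ has multiplicity $d$, with eigenfunctions proportional to the coordinate functions $x_1,\dots,x_d$ restricted (harmonically extended) to $B$. So the relevant object is the matrix $M$ from Theorem \ref{derisigma}(b), built from these $d$ eigenfunctions: $M_{ij}=\int_{\partial B}\big(\nabla_\tau u_i\cdot\nabla_\tau u_j-\partial_n u_i\,\partial_n u_j-\sigma H u_i u_j\big)V\cdot n$. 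The key point is that the derivative of the \emph{smallest} of the split eigenvalues equals the smallest eigenvalue of $M$; to beat the ball we need a $V$ making $M$ positive definite (all branches increasing), together with $\frac{d}{dt}|_{0}|\Omega_t|=0$ and $\frac{d}{dt}|_{0^+}D(\Omega_t)\le 0$.

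The concrete construction I would try is a deformation of the form $V=\nabla\Phi$ (or its normal trace $V\cdot n=f$ on $\partial B$) where $f$ is chosen from low-degree spherical harmonics. On the unit ball, with $u_i=x_i$ one has $\partial_n u_i = x_i$ on $\partial B$ (for $R=1$), $\nabla_\tau u_i = e_i - x_i x$, and $H=d-1$, so a direct computation reduces $M_{ij}$ to $\int_{\partial B}\big((\delta_{ij}-x_ix_j) - x_ix_j - (d-1)x_ix_j\big)f\,ds = \int_{\partial B}\big(\delta_{ij} - (d+1)x_ix_j\big)f\,ds$. Choosing $f$ a constant makes $M$ a negative multiple of the identity (the ball is critical but this direction decreases every branch — consistent with the scaling); the productive choice is $f$ in the space of degree-$2$ spherical harmonics, say $f = \alpha\big(x_1^2-\tfrac1d\big)$ or a symmetric combination, which has zero average (volume-preserving to first order) and, via the standard integrals $\int_{\partial B} x_i^2 x_j^2\,ds$, yields an $M$ that is diagonal with controllable, and in particular can be arranged to be \emph{positive}, entries. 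Then one checks $\frac{d}{dt}|_0|\Omega_t| = \int_{\partial B} f\,ds = 0$, and — this is the delicate part — that the diameter does not increase: by the Proposition on $\frac{d}{dt}|_{0^+}D(\Omega_t) = \frac{1}{D}\sup_{(x,y)\in\mathcal D_\Omega}\langle x-y,V(x)-V(y)\rangle$, and since for the ball $\mathcal D_B=\{(x,-x):|x|=1\}$ consists of all antipodal pairs, we need $\sup_{|x|=1}\langle 2x, V(x)-V(-x)\rangle \le 0$. Taking $V$ \emph{even} (so $V(x)=V(-x)$) forces this supremum to be exactly $0$, killing the first-order diameter change entirely; an even $V$ is compatible with an even normal trace $f$ coming from degree-$2$ harmonics.

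So the skeleton is: (i) identify $\sigma_1(B)$, its multiplicity $d$ and eigenfunctions $x_i$; (ii) specialize the matrix $M$ of Theorem \ref{derisigma}(b) to this case, getting $M_{ij}=\int_{\partial B}(\delta_{ij}-(d+1)x_ix_j)f$; (iii) exhibit an explicit even $f$ with $\int f=0$ for which $M$ is positive definite, using only elementary sphere integrals; (iv) invoke the diameter-derivative Proposition with $V$ even to conclude $\frac{d}{dt}|_{0^+}D(\Omega_t)=0$ while $|\Omega_t|=|B|+o(t)$; (v) since all branches of $\sigma_1$ strictly increase to first order and $D$ is stationary, rescaling to restore $D=d_0$ still leaves $\sigma_1$ strictly larger than at $B$, contradicting local maximality. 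The main obstacle — and the step requiring genuine care rather than bookkeeping — is (iii): one must ensure the \emph{same} $f$ simultaneously gives $M>0$, is even, and has vanishing mean; I would handle this by working in an irreducible component of degree-$2$ harmonics and exploiting the $O(d)$-equivariance, which forces $M$ restricted to suitable subspaces to be a scalar, so only the sign of one integral must be computed. A secondary subtlety is making sure the "even $V$" we build is an admissible $W^{3,\infty}$ field realizing the prescribed normal trace (a harmonic extension suffices), and that one-sided differentiability of $D$ — not two-sided — is all that is needed for the contradiction.
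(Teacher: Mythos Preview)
Your approach has a structural obstruction that makes step (iii) impossible as stated. With the eigenfunctions $u_i=x_i$ on the unit sphere, the matrix from Theorem \ref{derisigma}(b) is (up to the harmless normalization $1/\omega_d$) exactly what you wrote,
\[
M_{ij}=\int_{\partial B}\big(\delta_{ij}-(d+1)x_ix_j\big)f\,ds,
\]
and since $\sum_i x_i^2=1$ on $\partial B$ its trace is
\[
\operatorname{tr}M=\int_{\partial B}\big(d-(d+1)\big)f\,ds=-\int_{\partial B}f\,ds.
\]
So whenever $\int_{\partial B}f=0$ (your volume-preserving condition), $\operatorname{tr}M=0$ and $M$ can \emph{never} be positive definite; at least one branch of $\sigma_1$ must go down. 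No choice of degree-$2$ spherical harmonic, or any zero-mean $f$, escapes this. There is also a parity slip in step (iv): if $V(x)=f(x)\,n(x)=f(x)\,x$ on $\partial B$, then $V$ is even as a vector field iff $f$ is \emph{odd}, not even; degree-$2$ harmonics give even $f$, hence odd $V$, and then $\langle 2x,\,V(x)-V(-x)\rangle=4f(x)$ so the diameter derivative is $\tfrac{2}{D}\sup_{\partial B}f>0$. Conversely, taking $f$ odd (so $V$ is even and the diameter is stationary) forces every entry $M_{ij}$ to vanish by parity, so no first-order eigenvalue gain is possible either.

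This is exactly why the paper does \emph{not} try to push $\sigma_1$ up. It takes a two-frequency radial perturbation $V(X)=\big(a_2\cos(2\varphi_{d-1})+a_4\cos(4\varphi_{d-1})\big)X$: the $\cos(2\varphi_{d-1})$ part produces a matrix with eigenvalues $\{0,\ldots,0,-Ka_2,+Ka_2\}$ (trace zero, as it must be), so the smallest branch \emph{decreases} like $1-\varepsilon Ka_2$; the $\cos(4\varphi_{d-1})$ part is invisible to $M$ (orthogonal to all $x_jx_k$) but pushes two antipodal points apart, so the diameter grows like $2+2\varepsilon(a_2+a_4)$. Choosing $a_4>(K-1)a_2$ makes the product $D\sigma_1$ increase. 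The missing idea in your plan is precisely this decoupling: let the diameter do the work via a higher harmonic that the eigenvalue matrix does not see, rather than trying to beat the trace-zero constraint on $M$.
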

\begin{proof}
The idea of the proof is simply to find a perturbation of the ball which increases the product $D(B)\sigma_1(B)$. Without loss of generality, we work with the unit ball and
we use the usual spherical coordinates, that is,
$$\left\lbrace
\begin{array}{l}
x_1=\cos\ff_1\\
x_2=\sin\ff_1 \cos\ff_2 \\
\vdots \\
x_{d-1}=\sin\ff_1 \sin\ff_2 \ldots \sin\ff_{d-2} \cos\ff_{d-1} \\
x_{d}=\sin\ff_1 \sin\ff_2 \ldots \sin\ff_{d-2} \sin\ff_{d-1} \\
\end{array}
\right.$$
where $\ff_j\in [0,\pi]$ for $j\leq d-2$ while $\ff_{d-1}\in [0,2\pi]$. At last, the area element
is given by
$$
ds= \sin^{d-2}(\ff_1)\sin^{d-1}(\ff_2)\ldots \sin(\ff_{d-2}) d\ff_1 d\ff_2 \ldots d\ff_{d-1}.
$$

Let us consider a perturbation driven by the vector field defined in a neighbourhood of the unit
sphere by 
$$
V(X)=\big(a_2\cos(2\ff_{d-1}) + a_4\cos(4\ff_{d-1})\big)X,
$$
for some positive coefficients $a_2,a_4$ which will be chosen later. This means that, for every $\ep>0$ small enough,
we consider some perturbations of the unit ball $B$ defined by $B_\ep:=\{X+\ep V(X), X\in B\}$.
We have recalled in Theorem \ref{derisigma} (see also \cite[Corollary 3.8]{DKL}) that
the first Steklov eigenvalue $\sigma_1(B_\ep)$ has a directional derivative (even if this eigenvalue
is multiple). Such directional derivatives are given by the eigenvalues of the $d\times d$
matrix $\mathcal{M}$ whose entries are (as usual $\omega_d$ denotes the volume of the unit ball while
$\delta_{jk}$ is the Kronecker symbol)
$$
\mathcal{M}_{j,k}=\frac{\delta_{jk}}{\omega_d} \int_{\p B} V.n ds - \frac{d+1}{\omega_d} \int_{\p B} x_j x_k V.n ds\,.
$$
In our case, since $n=X$ on the unit sphere, we have $V.n=a_2\cos(2\ff_{d-1}) + a_4\cos(4\ff_{d-1})$. It is then not difficult to check that all the coefficients of the above matrix are zero except 
$\mathcal{M}_{{d-1},{d-1}}$ and $\mathcal{M}_{d,d}$ which are respectively given by
$$\mathcal{M}_{{d-1},{d-1}}=-\frac{d+1}{\omega_d} \int_{\p B} \prod_{j=1}^{d-2} \sin^2(\ff_j)\cos^2(\ff_{d-1})
\big(a_2\cos(2\ff_{d-1}) + a_4\cos(4\ff_{d-1})\big) ds$$
$$\mathcal{M}_{d,d}=-\frac{d+1}{\omega_d} \int_{\p B} \prod_{j=1}^{d-2} \sin^2(\ff_j) \sin^2(\ff_{d-1})
\big(a_2\cos(2\ff_{d-1}) + a_4\cos(4\ff_{d-1})\big) ds$$
Let us denote by $j_p:=\int_0^\pi \sin^p t\,dt$ (twice the classical Wallis' integrals). The previous formulae
can be rewritten as
$$
\mathcal{M}_{{d-1},{d-1}}=-\frac{d+1}{\omega_d} \prod_{p=3}^d j_p a_2 \frac{\pi}{2} \quad \text{and} \quad \mathcal{M}_{{d},{d}}=\frac{d+1}{\omega_d} \prod_{p=3}^d j_p a_2 \frac{\pi}{2}.
$$
Therefore, the eigenvalues of $\mathcal{M}$ are 0 of order $d-2$, $-Ka_2$ and $Ka_2$ where
$K$ is the positive constant (which is explicitly computable) $K:= \frac{(d-1)\pi}{2\omega_d}\prod_{p=3}^d j_p$.
In other words, the smallest eigenvalue $\sigma_1(B_\ep)$ has the following expansion (keep in mind that $a_2>0$)
$$
\sigma_1(B_\ep)=1-\ep K a_2 +o(\ep).
$$
Now, let us introduce the two antipodal points $N=(0,0,\dots, 1)$ and $S=(0,0,\ldots, -1)$. Through the deformation
they are sent to 
$$N_\ep=(0,0,\ldots,1+\ep(a_2+a_4))\quad \text{and}\quad S_\ep=(0,0,\ldots,-1-\ep(a_2+a_4))\,.$$
Thus, the diameter of $B_\ep$ is greater than  $N_\ep S_\ep =2+2\ep (a_2+a_4)$ and then  
$$
D(B_\ep)\sigma_1(\ep) \geq 2\big(1+\ep (a_2+a_4)\big)\big(1-\ep K a_2 +o(\ep)\big)=2\big(1+\ep(a_4-(K-1)a_2)+o(\ep)\big).
$$
It remains to choose the coefficient $a_4$ such that $a_4>(K-1)a_2$ to get the claim.
\end{proof}

Let us point out here that the main idea of the latter proof is quite elementary. Indeed, we construct a perturbation with two trigonometric
terms, the first one seen by the eigenvalue and the other one only seen by the diameter. Then, a suitable combination allows us to get a positive first derivative. In a same way, we are able to extend Theorem \ref{notball} to any Steklov eigenvalue. The details of the proof are left to the reader.

\begin{theorem}\label{notball2}
The ball is not a maximizer for Problem \eqref{maxD2} for any $\sigma_k$ and any dimension.
\end{theorem}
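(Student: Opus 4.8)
The plan is to mimic the proof of Theorem \ref{notball} but to arrange the trigonometric building blocks so that the relevant computation goes through for an arbitrary index $k$. As before, work with the unit ball $B\subset\re^d$ in spherical coordinates and look for a perturbation of the form
\[
V(X)=\Big(\sum_{j} a_j \cos(j\ff_{d-1})\Big) X,
\]
where only the last angular variable is used. The two effects to be controlled are: (i) the directional derivative of $\sigma_k(B_\ep)$, which by Theorem \ref{derisigma}(b) is an eigenvalue of a matrix built from integrals of $x_p x_q\, V.n$ against the Steklov eigenfunctions associated to $\sigma_k(B)$, and (ii) the variation of the diameter, governed by Proposition \ref{derdiam}, which only sees the displacement of the antipodal poles $N=(0,\dots,0,1)$, $S=(0,\dots,0,-1)$, i.e.\ the value of $V.n$ at $\ff_{d-1}=0$.

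First I would recall the structure of the Steklov spectrum of the ball: $\sigma_k(B)$ coincides with (the common value of) the degrees of spherical harmonics, and the eigenfunctions of a given eigenvalue are the homogeneous harmonic polynomials of that degree restricted to $\p B$. The directional derivative of $\sigma_k(B_\ep)$ is then obtained from the matrix $\mathcal M$ whose entries are integrals over $\p B$ of $(\text{harmonic polynomial products})\times(V.n)$; these are computable by the same orthogonality arguments used for $k=1$, since $V.n$ depends only on $\ff_{d-1}$ and the integrals factor over the remaining angles into Wallis-type integrals $j_p=\int_0^\pi\sin^p t\,dt$. The key point is that one of the $\cos(m\ff_{d-1})$ modes with $m\geq 1$ in $V$ produces a nonzero, and in fact negative, smallest eigenvalue of $\mathcal M$ — call its size $K_k a_m>0$ — so that $\sigma_k(B_\ep)=\sigma_k(B)-\ep K_k a_m+o(\ep)$; one must check $K_k>0$, which follows from positivity of the Wallis integrals and of the coefficient extracted from the harmonic polynomials (this is the place where the computation must be organised carefully mode by mode).

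Next I would add one further trigonometric term $a_M\cos(M\ff_{d-1})$ with $M$ chosen large enough that it is invisible to the eigenvalue matrix $\mathcal M$ for $\sigma_k$ — i.e.\ $M$ larger than any frequency that can be produced by the products of degree-$\sigma_k(B)$ harmonic polynomials times $\cos^2$ or $\sin^2$ of $\ff_{d-1}$, so that $\int_0^{2\pi}\cos(M\ff_{d-1})(\dots)\,d\ff_{d-1}=0$ — while still contributing a strictly positive amount $\ep(a_m+a_M)$ (after normalising the cosines so $V.n>0$ near the poles, exactly as in the $k=1$ case) to the displacement of $N$ and $S$, hence to $D(B_\ep)\geq 2+2\ep(a_m+a_M)$. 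Combining,
\[
D(B_\ep)\,\sigma_k(B_\ep)\geq 2\,\sigma_k(B)\Big(1+\ep\big(a_M-(K_k-1)a_m\big)+o(\ep)\Big),
\]
and choosing $a_M>(K_k-1)a_m$ gives a strictly positive first derivative, contradicting maximality of the ball. (Here I use the homogeneity $\sigma_k(tB)=\sigma_k(B)/t$ and Proposition \ref{derdiam} to justify that the poles do lie in $\mathcal D_B$ and that their motion controls $D$ from below.)

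The main obstacle I anticipate is purely computational bookkeeping: one must verify that there exists a frequency $m\geq 1$ for which the induced eigenvalue of $\mathcal M$ is genuinely nonzero for the eigenfunctions of $\sigma_k(B)$, and pin down its sign and a usable lower bound $K_k>0$, uniformly enough to make the final inequality non-vacuous. For $k=1$ the degree-one harmonics are just the coordinate functions $x_p$, which makes the matrix essentially diagonal with the explicit constant $K$; for general $k$ the degree-$\ell$ harmonics (with $\sigma_\ell(B)=\ell$ the value realising $\sigma_k$) form a larger space and the matrix $\mathcal M$ is larger, so one has to argue that its smallest eigenvalue is negative — most cleanly by exhibiting an explicit test vector (e.g.\ corresponding to a harmonic polynomial concentrated in the $x_{d-1},x_d$ plane, as the $k=1$ case suggests) on which the quadratic form $\sum M_{ij}\xi_i\xi_j$ is strictly negative for a suitable choice of the mode $m$ and the sign of $a_m$. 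Once that negativity and the invisibility of the high mode $M$ are in hand, the rest is identical to the proof of Theorem \ref{notball}, which is why the paper is content to leave the details to the reader.
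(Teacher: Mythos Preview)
Your plan is exactly what the paper has in mind and leaves to the reader: a low-frequency mode $\cos(m\ff_{d-1})$ that the matrix $\mathcal M$ sees, a high-frequency mode $\cos(M\ff_{d-1})$ (with $M$ larger than twice the degree $\ell=\sigma_k(B)$ of the relevant spherical harmonics, so that every entry $M_{ij}$ integrates it to zero) invisible to $\mathcal M$ but pushing a pair of antipodal points outward, and then a suitable choice of $a_M$ relative to $a_m$.

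One point deserves correction, and it actually makes your ``main obstacle'' disappear. You identify $\sigma_k'(0)$ with the \emph{smallest} eigenvalue of $\mathcal M$, but for a general $k$ lying inside a multiple cluster $\sigma_{k_0}(B)=\cdots=\sigma_{k_1}(B)$, Theorem~\ref{derisigma}(b) gives $\sigma_k'(0)$ as the $(k-k_0+1)$-th eigenvalue of $\mathcal M$, whose sign you do not control and need not establish. What matters is only that, since the high mode contributes nothing to $\mathcal M$, every eigenvalue of $\mathcal M$ is proportional to $a_m$; hence $|\sigma_k'(0)|\le C\,a_m$ for some constant $C=C(d,k)$ regardless of sign. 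Combined with $D(B_\ep)\ge 2+2\ep(a_m+a_M)$ this yields
\[
D(B_\ep)\,\sigma_k(B_\ep)\ \ge\ 2\sigma_k(B)+2\ep\big(\sigma_k(B)\,a_M+(\sigma_k(B)-C)\,a_m\big)+o(\ep),
\]
which is strictly larger than $2\sigma_k(B)$ as soon as $a_M$ is taken large enough. So no negativity argument for $\mathcal M$, and no explicit test vector among the degree-$\ell$ harmonics, is required.
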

\subsection{Multiplicity}

As explained in the introduction, it is suspected that most of optimization problems for eigenvalues
have solution with multiplicity. For example, if we denote by $\O_k^*$ a domain
which minimizes the $k$-th eigenvalue of the Laplacian with Dirichlet boundary conditions (see \cite{Buc}
and \cite{Maz-Pra}) it is still an open problem (see, e.g., \cite{H1}) to prove that $\lambda_{k-1}(\O_k^*)=\lambda_{k}(\O_k^*)$
for any $k\geq 3$ (what we know so far is the case $k=2$).

In our context, we are able to prove (by contradiction) such a result for a smooth optimal domain in the plane. We establish it in both situations: without any constraint or with a convexity constraint.

\begin{theorem}\label{double} Let $\Omega^{\star}$ be a smooth ($C^3$) optimal domain in the plane
with or without convexity constraints. Then, the $k$-th Steklov
eigenvalue $\sigma_{k}(\Omega^{\star})$ is multiple. 
\end{theorem}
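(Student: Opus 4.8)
The plan is to argue by contradiction: assume $\sigma_k(\Omega^\star)$ is simple and produce an admissible deformation that strictly increases the product $D(\Omega)\sigma_k(\Omega)$, contradicting optimality for Problem \eqref{maxD2}. Since $\Omega^\star$ is $C^3$, Theorem \ref{derisigma}$(a)$ applies and, for any $V\in W^{3,\infty}(\Omega^\star,\mathbb R^2)$, the shape derivative of $\sigma_k$ at $0$ is the linear functional $V\mapsto \int_{\partial\Omega^\star}\big(|\nabla_\tau u|^2-|\partial_n u|^2-\sigma_k H|u|^2\big)\,V\cdot n\,ds$, where $u$ is the normalized eigenfunction. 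The shape derivative of the diameter, by the Proposition of Section \ref{secderdiam}, is $\frac1{D(\Omega^\star)}\sup_{(x,y)\in\mathcal D_{\Omega^\star}}\langle x-y,\,V(x)-V(y)\rangle$. So the one-sided derivative of $t\mapsto D(\Omega_t^\star)\sigma_k(\Omega_t^\star)$ in direction $V$ equals $\sigma_k(\Omega^\star)\cdot(\text{diameter derivative}) + D(\Omega^\star)\cdot(\text{eigenvalue derivative})$, and optimality forces this to be $\le 0$ for every $V$.

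The key point is to exploit the interaction between these two terms. First I would choose $V$ supported away from the set $\mathcal D_{\Omega^\star}$ of diameter-realizing pairs (possible since $\mathcal D_{\Omega^\star}$ is compact and, for a smooth planar domain, its projection to $\partial\Omega^\star$ is a proper closed subset); for such $V$ the diameter term vanishes and optimality gives $\int_{\partial\Omega^\star}\big(|\nabla_\tau u|^2-|\partial_n u|^2-\sigma_k H|u|^2\big)V\cdot n\,ds\le 0$, hence, applying this to both $\pm V$, the density $|\nabla_\tau u|^2-|\partial_n u|^2-\sigma_k H|u|^2$ must vanish identically on $\partial\Omega^\star$ outside the (small) contact set. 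Then I would use a deformation $V$ that does move the diameter — say $V(x)=x$ near the diameter endpoints, glued smoothly — and compute both contributions. The diameter derivative is then strictly positive (a dilation of the endpoints), while the eigenvalue contribution is controlled by the already-established vanishing of the density plus a boundary term near the contact set that can be made arbitrarily small by concentrating $V\cdot n$ there; this yields a direction with strictly positive derivative of the product, the desired contradiction. A parallel argument handles the convex-constrained case, using that an admissible inward/outward perturbation localized near a smooth boundary piece disjoint from the diameter segment preserves convexity to first order.

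The main obstacle is the non-smooth, sup-type nature of the diameter derivative and the bookkeeping near the contact set $\mathcal D_{\Omega^\star}$: the functional $V\mapsto \sup_{(x,y)\in\mathcal D_{\Omega^\star}}\langle x-y,V(x)-V(y)\rangle$ is only sublinear, not linear, so optimality only gives a one-sided inequality and one must be careful to pick $V$ for which the $\sup$ is actually attained with the right sign while simultaneously controlling the eigenvalue term. In particular one must verify that the eigenfunction $u$ and the curvature $H$ do not conspire, on the small contact set, to cancel the gain from the diameter; this is where the planar dimension ($d=2$) and the ability to localize $V\cdot n$ in an arbitrarily thin boundary arc near the contact points are used decisively. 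I would treat the contact set as (generically) finite and estimate the leftover boundary integral by $\|V\cdot n\|_{L^1}$ times $\|u\|_{C^1}^2+\sigma_k\|H\|_\infty$, which is $O(\epsilon)$ while the diameter gain is $O(1)$, closing the argument.
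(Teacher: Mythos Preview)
In the unconstrained case your plan works, and is in fact more direct than the paper's argument; your worry that the contact set be ``generically finite'' is unnecessary. The paper first shows, as you do, that the density $g:=u_\tau^2-\sigma_k^2u^2-\sigma_kHu^2$ vanishes off the projection $K$ of $\mathcal{D}_{\Omega^\star}$, then spends a separate case analysis (its Case~2) extending $g\equiv0$ to points of $K$ when $K$ is not discrete, and finally reaches a contradiction via the Rellich identity, which in $d=2$ amounts to the scale-invariance relation $\int_{\partial\Omega^\star}g\,(X\cdot n)\,ds=-\sigma_k\neq0$. Your localized-dilation step bypasses both the case split and the Rellich computation: pick a single diameter pair $(x_0,y_0)$ and take $V$ with $V(x_0)=x_0$, $V(y_0)=y_0$, supported in an $\varepsilon$-neighbourhood of $\{x_0,y_0\}$; then $dD(\Omega^\star;V)\ge D$ while $|d\sigma_k(\Omega^\star;V)|\le\|g\|_\infty\|V\cdot n\|_{L^1(\partial\Omega^\star)}=O(\varepsilon)$ regardless of the size of $K$, so $d(D\sigma_k)(\Omega^\star;V)\ge\sigma_kD-O(\varepsilon)>0$. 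Note that this last estimate does not even use that $g$ vanishes off $K$; the point is simply that the diameter derivative is a \emph{supremum} (hence large for a localized push at one diametral pair) whereas the eigenvalue derivative is an \emph{integral} (hence small for a localized push).

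The genuine gap is your one-line treatment of the convex-constrained case. On a flat segment $\Sigma\subset\partial\Omega^\star$ only perturbations with $V\cdot n$ \emph{concave along $\Sigma$} keep the deformed domain convex, so you cannot test with both $\pm V$ and therefore cannot conclude $g=0$ pointwise on $\Sigma$; worse, a localized bump in $V\cdot n$ near a diameter point lying on $\Sigma$ is convex rather than concave along $\Sigma$, so your localized-dilation field is inadmissible precisely where you need it. The paper resolves this with a dedicated lemma: on each segment, optimality only yields $g=w''$ for some nonnegative $w$ vanishing to third order at the endpoints. The contradiction then comes not from an admissible perturbation but from the Rellich identity used as an \emph{identity}, together with the observation that $X\cdot n$ is constant on $\Sigma$, so $\int_\Sigma g\,(X\cdot n)\,ds=(X\cdot n)\big[w'(b)-w'(a)\big]=0$. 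Some replacement for this lemma --- or a genuinely convexity-preserving localized perturbation near segments --- is needed to complete your argument in the constrained case.
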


\begin{proof}
Since there is no ambiguity here, we will denote by $\sigma$ the Steklov eigenvalue and $u$ a normalized
associated Steklov eigenfunction.
Let us start with the unconstrained case. First, note that the smoothness
assumption on the optimal domain implies two things:
\begin{itemize}
\item we can use the shape derivative formulae stated in Theorem \ref{derisigma};
\item by elliptic regularity, the eigenfunctions are at least $C^2$ up to the boundary.
\end{itemize}
We write the usual Rellich formula valid for any smooth function $v$ (see e.g., \cite{Ne,Re}):
$$2\!\!\int_{\partial \Omega^*}\!\!\!(x.\nabla v){\frac{\partial v}{\p n}} -\int_{\partial \Omega^*}\!\!\!(x.n)\vert \nabla v\vert ^2\!\!
=\!2\!\int_{\Omega^*}\!\! (x.\nabla v)\Delta v+(2-d)\!\!\int_{\Omega^*} \!\!\vert\nabla v\vert ^2. $$
For our eigenfunction $u$ in dimension 2, this yields
$$
2\!\!\int_{\partial \Omega^*}\!\!\!(x.\nabla u){\frac{\partial u}{\p n}} -\int_{\partial \Omega^*}\!\!\!(x.n)\vert \nabla u\vert ^2\!\!
= 0.
$$
Decomposing the gradient in its tangential and normal component, that is, $\nabla u=u_\tau \tau + u_n n$
and using $u_n=\sigma u$ yields
$$
2\sigma \int_{\partial \Omega^*} u u_\tau X.\tau + \sigma^2 \int_{\partial \Omega^*} u^2 X.n - \int_{\partial \Omega^*} u_\tau^2 X.n=0.
$$
Now, assume by contradiction that the eigenvalue is simple. For any deformation field $V$
the shape derivative of $D\sigma_K$ is nonpositive. Let us denote by $\mathcal{D}$
the set of diameter points (where the diameter is achieved, this is the projection 
of the set $\mathcal{D}_{\Omega^{\star}}$ introduced in Section \ref{secderdiam}). Let
$x\notin \mathcal{D}$ a point which does not belong to a diameter.
A small perturbation $V$ locally supported near $x$ does not change the diameter and since both $V$ and
$-V$ are admissible, we infer $d\sigma_k(\O^*,V)=0$ for all such $V$ which implies, according to
Theorem \ref{derisigma}:
\begin{equation}\label{mul1}
u_\tau^2-\sigma^2 u^2 -H \sigma u^2 =0 \quad\mbox{at any $x\notin \mathcal{D}$}.
\end{equation}

 We are going to distinguish two cases:
\begin{itemize}
\item {\bf Case 1.} The set $\mathcal{D}$ is discrete. 
In that case, the relation \eqref{mul1} holds everywhere on the boundary by continuity of the function and its
derivatives up to the boundary.
\item {\bf Case 2.} Now, let us assume that the set $\mathcal{D}$ (where the diameter is achieved) is not discrete. Let us consider a point $x\in \mathcal{D}$. If we perform a local perturbation driven
by $V$ near $x$, we can consider two situations: either $V.n$ is inward and then the diameter does
not change and we recover $d\sigma_k(\O^*,V) \leq 0$ at such a point or $V.n$ is outward and the
derivative of the diameter is positive, according to formulae \eqref{derdiam} which implies
$$D(\O^*) d\sigma_k (\O^*,V) \leq d(D \sigma_k)(\O^*,V) =\sigma_k(\O^*) d D  (\O^*,V) + D(\O^*) d\sigma_k(\O^*,V)  \leq 0$$
and then in any case, we have $d\sigma_k(\O^*,V) \leq 0$. Since this property holds for any $V$, we still
infer $d\sigma_k(\O^*,V) = 0$ and the equality \eqref{mul1} holds true on the whole boundary.
\end{itemize}
Therefore, in both cases, we have
$$
u_\tau^2-\sigma^2 u^2 = H \sigma u^2 \mbox{ on $\p\O^*$}.
$$
Thus, 
\begin{equation}\label{rel4}
2\sigma \int_{\partial \Omega^*} u u_\tau X.\tau = \int_{\p\O^*} u_\tau^2-\sigma^2 u^2 X.n ds= \sigma \int_{\p\O^*} H u^2 X.n \,ds\,.
\end{equation}
Now let us compute the left-hand side of \eqref{rel4} integrating by parts. Since 
$X.\tau = xx'+yy'$ and $2u u_\tau = \frac{d}{ds}\, u^2$ we have
\begin{equation}\label{rel5}
2\sigma \int_{\partial \Omega^*} u u_\tau X.\tau = - \sigma \int_{\partial \Omega^*} u^2 [xx"+yy"+{x'}^2+{y'}^2] ds\,.
\end{equation}
Now since $\frac{d}{ds} \tau =-H n$ and ${x'}^2+{y'}^2=1$, \eqref{rel5} provides
$$2\sigma \int_{\partial \Omega^*} u u_\tau X.\tau = \sigma \int_{\partial \Omega^*} H u^2 X.n ds - \sigma
\int_{\partial \Omega^*} u^2 ds$$
which, together with \eqref{rel4} would give
$$\sigma \int_{\partial \Omega^*} u^2 ds =0$$
a contradiction since $\sigma$ cannot be zero for the maximizer. This finishes the proof of 
the unconstrained case.

\medskip
Now let us consider the case with a convexity constraint. On strictly convex parts, we can perform any deformation and then the identity $u_\tau^2-\sigma^2 u^2 = H \sigma u^2$ still holds true on strictly
convex parts of the boundary. The flat parts (or segments) require more attention.
Let us consider such a segment, say $\Sigma \subset \p\O^*$. In the spirit of \cite{He-Ou} (see also
\cite[Theorem 4.2.2]{H1}) we can prove the following:

\begin{lemma}
Let $\O^*$ be a smooth maximizer of $D(\O)\sigma_k(\O)$ among convex sets and let $\Sigma$ be a segment
of extremities $A$ and $B$ included in the boundary of $\O^*$. Let $t\in [a,b]$, a parametrization 
of the segment (the
boundary is assumed to be oriented in the clockwise sense). Then, 
there exists a nonnegative-valued function
 $w$ defined on $[a,b]$ with triple roots at $a$ and $b$, such that
\begin{equation}\label{mul3}
u_\tau^2-\sigma^2 u^2 =  w''(t)\,.
\end{equation}
\end{lemma}

{\it Proof of the Lemma}: The diameter constraint cannot be achieved at any point of the segment
(but possibly its extremities). Thus, we just need to look at the derivative of the Steklov eigenvalue
which is given by Theorem \ref{derisigma}:
\begin{equation}\label{4.7}
d\sigma_k(\O^*,V)=\int_{\p\O^*} [u_\tau^2-\sigma^2 u^2 -H\sigma u^2] V.n \,ds.
\end{equation}
Recall that the the curvature $H=0$ on $\Sigma$. In formula \eqref{4.7}, the only perturbations $V$ which are
allowed are such that the deformed domain $(Id+\tau V)(\O^*)$ is
still convex (for small $\tau$). This holds true if and only if
$t\mapsto V.n(t)$ is a concave function on $[a,b]$. Let us denote
by $v=V.n$ such a concave function. Replacing in \eqref{4.7} and
using the relation  $u_\tau^2-\sigma^2 u^2 -H\sigma u^2=0$ which holds on the strictly convex parts
as explained above, yields on the segment $\Sigma$:
\begin{equation}\label{2a}
\int_a^b [u_\tau^2-\sigma^2 u^2] v\,dt\leq 0\,.
\end{equation}
Setting $w_2(t):=u_\tau^2-\sigma^2 u^2$ this can also be
rewritten
\begin{equation}
\label{3a}
\int_a^b w_2(t)v(t)\,dt\leq 0.
\end{equation} 
The latter estimate \eqref{3a} must be true for every (regular) concave function $v$.
In particular, in the case $v(t)=1$ and $v(t)=t$, both functions
$v$ and $-v$ are concave, therefore
\begin{equation}
\label{4a}
\int_a^b w_2(t)\,dt= 0\quad \int_a^b tw_2(t)\,dt=
0\;.
\end{equation} 
Now, let us introduce the functions defined by
$$w_1(t)=\int_a^t w_2(s)\,ds\quad\mbox{and}\quad w(t)=\int_a^t
w_1(s)\,ds=\int_a^t (t-s)w_2(s)\,ds\;.$$ According to \eqref{4a},
we have $w_1(a)=w_1(b)=w(a)=w(b)=0$. Integrating twice by parts,
it comes  
$$
\int_a^b w_2(t)v(t)\,dt=\int_a^b w(t)v''(t)\,dt.
$$ 
This last integral must be nonpositive (according to \eqref{3a}) for
every function $v$ concave,  i.e., for every smooth function $v$ such that
 $v''\leq 0$. This guarantees that $w\geq 0$. At last $a$ and $b$
are triple roots of  $w$ because $w''(a)= w_2(a)=0$ by continuity
of the gradient. This finishes the proof of the Lemma. \qed

\medskip
Let us come back to the proof of Theorem \ref{double}. We have already seen that 
$u_\tau^2-\sigma^2 u^2 = H \sigma u^2$ on the strictly convex parts of $\p\O^*$.
Now, let us consider a segment $\Sigma$. On such a segment $X.n$ is constant (equal to the distance, 
say $\delta$, of the origin to the line supporting the segment). Therefore, according to \eqref{mul3},
we have
$$
\int_\Sigma [u_\tau^2-\sigma^2 u^2] X.n dt = \delta \int_a^b w''(t) dt =w'(b)-w'(a) =0 =
\sigma \int_\Sigma H \sigma
u^2 X.n ds.
$$
Therefore, the relation
$$\int_{\p\O^*} u_\tau^2-\sigma^2 u^2 X.n ds= \sigma \int_{\p\O^*} H u^2 X.n \,ds$$
holds true on the whole boundary and we can conclude as in the case without convexity constraint.
\end{proof}
\begin{remark}
The numerical simulations of the next section suggest that the optimal domain is not exactly $C^3$ regular. It seems that its boundary has two singular points where the diameter is achieved.
Nevertheless, it is straightforward to check that Theorem \ref{double} remains true if we replace the $C^3$ regularity assumption by the following weaker assumptions that could be true for our optimal domains:
\begin{itemize}
\item the boundary of the optimal domain is $C^3$ except at a finite number of points;
\item the curvature $H$ is bounded;
\item the eigenfunction $u$ belongs to $C^1(\overline{\Omega})$.
\end{itemize}
\end{remark}

\section{Numerical simulations}\label{secnum}

In Section \ref{secqua}, we showed among other things that the disk is never a local maximizer of $\sigma_k(\Omega)$ under a diameter constraint. This leads us to provide some numerical computations in order to find some approximations of these maximizers in the plane. We point out that the diameter constraint is difficult to handle in a numerical point of view: this comes from the fact that on regions where this constraint is saturated not all arbitrarily small perturbations are admissible. 

A good tool for investigating the diameter constraint in the convex setting is the support function. 
This is why, in a first stage we consider the maximization problem in the class of convex sets. The support function of a set $\Omega \subset \mathbb{R}^2$ is defined for each $\theta \in [0,2\pi]$ by
\[ p(\theta) = \max_{x \in \Omega} x \cdot (\cos \theta, \sin \theta),\]
where the dot $\cdot$ denotes as usual the Euclidean scalar product. An intuitive interpretation of $p(\theta)$ is the distance from the origin to the tangent orthogonal to $\theta$ (see Figure \ref{fig:supp_func} for an illustration). With this geometric meaning of the support function in mind, it is obvious that the diameter or the width of $\Omega$ in the direction $\theta$ is given by $p(\theta)+p(\theta+\pi)$. 

\begin{figure}[h!]
	\centering
	\includegraphics[width=0.4\textwidth]{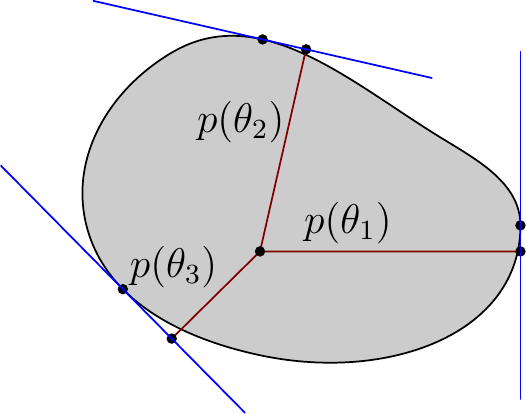}
	\caption{Geometric interpretation of the support function.}
	\label{fig:supp_func}
\end{figure}

The support function has been successfully used in the approximation of optimal shapes under convexity constraint  (see, e.g., \cite{BHcw12,BHL,AB18}). In the paper \cite{BHcw12}, the authors investigate various functionals related to the volume and perimeter. The work \cite{BHL} is devoted to the case of Dirichlet-Laplace eigenvalues and in \cite{AB18} the method is extended to the dimension three and multiple case tests are provided. If $p$ is the support function of a strictly convex domain $\Omega$ then, as recalled in these works, a parametrization of $\partial \Omega$ is given by
\begin{equation} \begin{cases}
x(\theta) = p(\theta) \cos \theta - p'(\theta) \sin \theta, \\
y(\theta) = p(\theta) \sin \theta + p'(\theta) \cos \theta.
\end{cases}
\label{eq:param_supp_func}
\end{equation}
This shows immediately that the radius of curvature is $\rho = p+p''$ and the convexity of $\Omega$ implies $p+p''\geq 0$. Conversely, a classical fact recalled in all these cited works is the fact that a $C^1$ and $2\pi$ periodic real function $p$ which satisfies (in the sense of distributions) $p+p'' \geq 0$ is the support function of a unique convex shape $\Omega \subset \mathbb{R}^2$.

In all the works cited above, the support function is discretized with the help of a truncated spectral decomposition (namely, Fourier series in dimension two and spherical harmonics decomposition in dimension three). This makes easier the treatment of some constraints, like the constant width constraint, but forces the support function to be smooth. On the other hand, as already seen in \cite{He-Ou}, when dealing with spectral functionals under convexity constraints, minimizers often tend to contain segments in their boundary. In such cases, the support function is not smooth anymore and then its parametrization through spectral decomposition is no longer appropriate. This is why in the following we choose a different approach which can handle discontinuities in the derivative of the support function. 

Consider $N$ a positive integer and $\theta_i = 2\pi i/N$, $i=0,...,N-1$ angles in $[0,2\pi]$. Then, the support function will be discretized by considering its values $p_i = p(\theta_i)$, $i=0,...,N-1$ at the angles chosen before. The first and second derivatives of $p$ are approximated using finite differences in the following way:
\begin{equation}
p'(\theta_i) = \frac{p_{i+1}-p_{i-1}}{2h} \quad \text{and} \quad p''(\theta_i) = \frac{p_{i+1}+p_{i-1}-2p_i}{h^2},
\label{First-Second_Deriv}
\end{equation}
for $i= 0,...,N-1$ (indices considered modulo $N$) and $h = 2\pi/N$.

The computation of the Steklov eigenvalues is done using the software FreeFEM (\cite{freefem}) while the constrained optimization is done with the algorithm IPOPT (\cite{IPOPT}). In the FreeFEM software the domain is meshed and finite elements are used in the computations. The main components of the optimization algorithm are shown below.

{\bf Convexity and Diameter constraints.} The convexity constraint is imposed pointwise for each $\theta_i$, that is,
$$
p(\theta_i)+p''(\theta_i) \geq 0 \quad i=0,...,N-1.
$$
Taking into account the second equality of \eqref{First-Second_Deriv}  then yields
\[ p_i+ \frac{1}{h^2}(p_{i+1}+p_{i-1}-2p_i) \geq 0,\ i=0,...,N-1,\] 
which can be translated into a set of $N$ linear inequality constraints on the variables $p_i$, $i=0,...,N-1$.

We have seen before that diameter constraints can be imposed by controlling the quantity $p(\theta)+p(\theta+\pi)$. In practice, we consider $N$ even, so that $\theta_i+\pi = \theta_{i+N/2}$ with indices considered modulo $N$. The fact that the shape has diameter at most $d$ is expressed by
\[ p_i+p_{i+N/2} \leq d,\ i=0,...,N/2-1.\]
In order to have diameter exactly equal to $d$, we impose the reverse inequality for one pair of opposite points:
\[ p_0+p_{N/2} \geq d.\]
At last, we obtain a set of $N/2+1$ linear inequality constraints. 

{\bf Construction of the mesh.} The inputs of the objective function are values of $p_0,...,p_{N-1}$. Starting from these values and using \eqref{eq:param_supp_func} we can find points $Q_i(x(\theta_i),y(\theta_i))$ by approximating the derivatives $p'(\theta_i)$ using centered finite differences as shown above (see \eqref{First-Second_Deriv}). The points $Q_i$ form a polygonal line whose interior is meshed in FreeFEM.

Note that the meshing algorithm in FreeFEM will give an error if the polygonal line contains self-intersections. In case such an error appears we reject the current computation. The algorithm IPOPT which deals with the optimization will eventually produce admissible vectors when imposing the convexity constraints shown above. 

The discretization points may be close on the boundary of $\partial \Omega$, especially close to eventual angular points. On the other hand, on parts which are almost flat, the discretization points will be rather sparse. In order to have a good finite element approximation the quality of the resulting mesh is improved using the command \texttt{adaptmesh} with parameters \texttt{hmax}=0.05*D, \texttt{nbvx}=50000, \texttt{iso}=1 refering to the maximal size of triangles, maximal number of vertices and the quality of the mesh. For more details, one should consult the FreeFEM documentation. 

{\bf Eigenvalue problem and gradient of the objective function.} Once the mesh is constructed, FreeFEM allows us to solve the eigenvalue problem starting from the variational formulation using finite elements. It is possible to recover the approximate eigenvalue and the associated eigenfunction. Concerning the finite element setup, $P_2$ finite elements are used for solving the eigenvalue problem and $P_1$ elements are used for evaluating the shape derivative (which contains derivatives of $P_2$ functions).  In the discrete setting, the eigenvalue $\sigma_k(\Omega)$ is a function of the parameters $p_i$, $i=0,...,N-1$:
\[ \sigma_k(\Omega) \approx F_k(p_0,...,p_{N-1}).\]
In order to use a gradient based optimization algorithm, it is necessary to compute the gradient of $F_k$ with respect to each one of the parameters. The classical method to handle this is to use the shape derivative formula given in Theorem \ref{derisigma}. Then, for each one of the parameters $p_i$, we look at the boundary perturbation $V_i$ obtained when considering perturbations $p_i+\delta t$ as $\delta t \to 0$. It suffices to put the perturbation $V_i$ in the shape derivative formula to obtain the gradient with respect to the variable $p_i$. A straightforward computation shows that a perturbation of the form $p_i+\delta t$ induces a vector field $V_i$ such that $V_i.n$ is equal to $1$ at $Q_i$ and is $0$ for every other point in the discrete boundary. Define $\chi_i$ to be a function which is piecewise affine on the segments $Q_iQ_{i+1}$ and which is $1$ at $Q_i$ and $0$ at $Q_j$ for $j \neq i$. Then the gradient of $F_k$ with respect to $p_i$ is approximated by
\[ \frac{\partial F_k}{\partial p_i} = \int_{\partial \Omega} 
  \left( 
  |\nabla u_k|^2 -(\partial_n u_k)^2- H \sigma_k u_k^2 
  \right) \chi_i d\sigma 
\]
where, as usual, $\sigma_k$, $u_k$ denote the $k$-th eigenvalue and associated eigenfunction and $H$ denotes the curvature. The FreeFEM command \texttt{curvature} is used to approximate the discrete curvature of the polygonal line.

{\bf Optimization algorithm.} As already mentioned before, the optimization is done in FreeFEM using the algorithm IPOPT. The inputs are the function $F_k$ and its gradient, as well as the matrices involving the linear discrete constraints associated to the convexity and diameter constraints. In addition to the linear constraints, pointwise positivity constraints are imposed on $p_i$, since we can assume that the origin is strictly inside our shape. The discretization uses $N=200$ angles in $[0,2\pi]$ and the diameter is fixed to $D=2$.

{\bf Results and remarks.} The algorithm is run for $1\leq N \leq 7$ 
and the resulting numerical optimal shapes are represented in Figure \ref{fig:results}. The numerical results give rise to the following remarks:
\begin{itemize}
	\item As predicted by the theoretical results, in each case the optimal eigenvalue is multiple: $\sigma_k(\Omega^*_k) = \sigma_{k+1}(\Omega_k^*)$
	\item In all the numerical results obtained the convexity constraint is saturated in some region, giving rise to segments in the boundary. Note that the direct discretization of the support function proposed here manages to properly capture this phenomenon, which was not the case for the Fourier decomposition used in \cite{AB18}.
	\item The diameter constraint seems to be saturated only at the two antipodal points included in the constraints. Moreover, angular points seem to be present at these antipodal points.
	\item The sequence of maximizers seems to become more and more flat as the index grows.
We discuss that point below. 
\end{itemize}

\begin{figure}[h!]
	\centering 
	\begin{tabular}{>{\centering\arraybackslash}m{0.3\textwidth}>{\centering\arraybackslash}m{0.3\textwidth}>{\centering\arraybackslash}m{0.3\textwidth}}
		\includegraphics[width=0.29\textwidth]{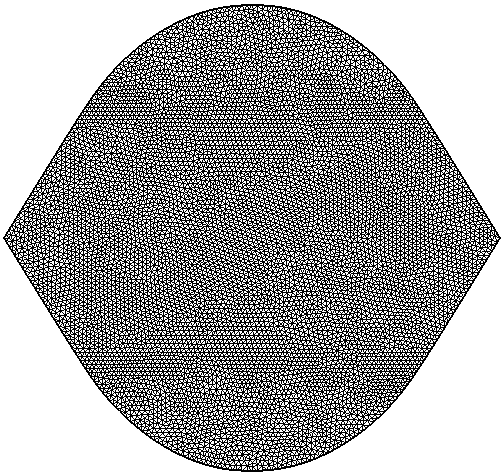} &
     	\includegraphics[width=0.29\textwidth]{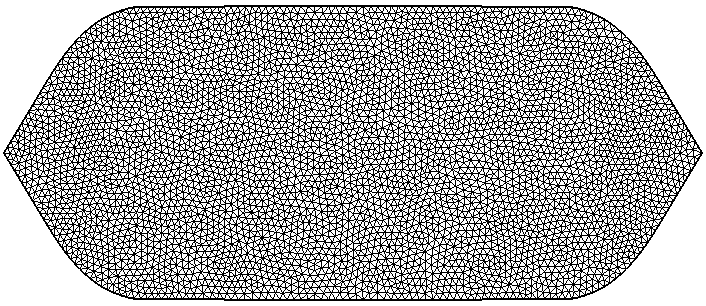} &
	    \includegraphics[width=0.29\textwidth]{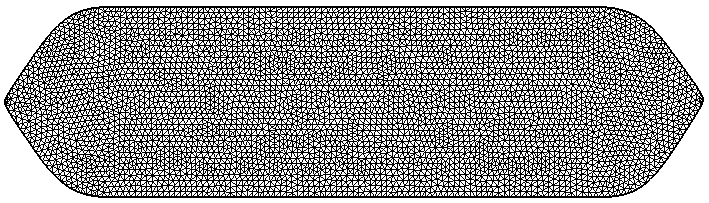}  \\
	    $\sigma_1(\Omega_1^*)D(\Omega_1^*)=2.13536$ &
	    $\sigma_2(\Omega_2^*)D(\Omega_2^*)=4.73269$ &
	    $\sigma_3(\Omega_3^*)D(\Omega_3^*)=7.33378$ 
    \end{tabular}
    \begin{tabular}{>{\centering\arraybackslash}m{0.45\textwidth}>{\centering\arraybackslash}m{0.45\textwidth}}
		\includegraphics[width=0.4\textwidth]{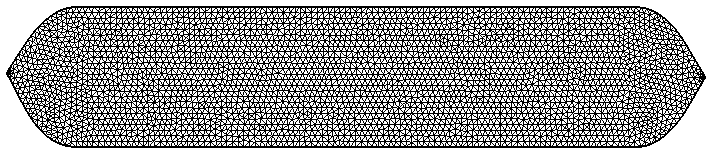} &
		\includegraphics[width=0.4\textwidth]{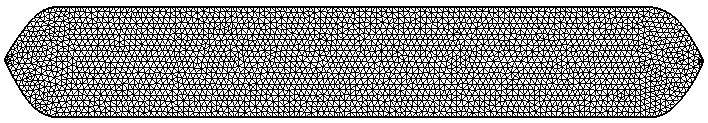} \\
		$\sigma_4(\Omega_4^*)D(\Omega_4^*)=9.96641$ & 
	  $\sigma_5(\Omega_5^*)D(\Omega_5^*)=12.5721$ \\
 		\includegraphics[width=0.4\textwidth]{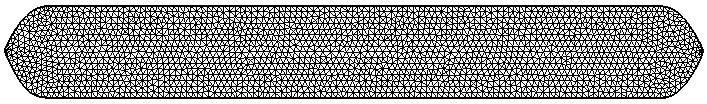} &
		\includegraphics[width=0.4\textwidth]{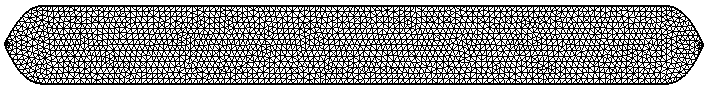} \\
	$\sigma_6(\Omega_6^*)D(\Omega_6^*)=15.1812$ & $\sigma_7(\Omega_7^*)D(\Omega_7^*)=17.8068$
	\end{tabular}
\caption{Results of the optimization algorithm: maximization of $\sigma_k(\Omega)D(\Omega)$ under convexity constraint.}
\label{fig:results}
\end{figure}

\begin{remark}
The numerical results above suggest that the sequence of optimal
domains converges to a segment. This would be really interesting since it would
differ with the case of Dirichlet eigenvalues where it is shown in \cite{BF}
(perimeter constraint) and in \cite{BHL} (diameter constraint) that the sequence of 
optimal domains converges to a disk (or even a ball in any dimension in the second case).
A possible strategy to prove that fact can be by contradiction:
\begin{enumerate}
\item Assume that the sequence of optimal domains $(\Omega_k^*)_k$ converges for the Hausdorff metric to
a convex open set $\Omega_\infty$ with diameter $d_0$.
Then, provide a uniform control of the difference of eigenvalues of the kind 
$0\leq \sigma_k(\Omega_k^*) - \sigma_k(\Omega_\infty) \leq \varepsilon(k)$
with $\varepsilon(k)/k \to 0$ as $k\to \infty$.
\item Now fix a smooth convex set $\omega$ (like an elongated ellipse) of diameter $d_0$ and with a perimeter such that $P(\omega)< P(\Omega_\infty)$. 
Optimality of $\Omega_k^*$ then yields
$$\sigma_k(\Omega_\infty)+  \varepsilon(k) \geq \sigma_k(\Omega_k^*) \geq \sigma_k(\omega).$$
\item Finally apply Weyl's law
for Steklov eigenvalues that writes (see \cite{GP1})
\begin{equation}\label{weyl}
\sigma_k(\Omega)\sim \frac{2\pi k}{P(\Omega)} \quad \text{as}\:\:k\to \infty
\end{equation}
to get $P(\Omega_\infty)\leq P(\omega)$, which is a contradiction.
\end{enumerate}
The flaw of this strategy is that Weyl's law \eqref{weyl} is only known for smooth open sets,
see the discussion in \cite{GP1}. Now, we have no guarantee that the limit convex domain
$\Omega_\infty$ would be smooth!
\end{remark}

{\bf Alternative approach and the non-convex case.} The method described above imposed rigorously the convexity and diameter constraints. As can be seen in Figure \ref{fig:results} the numerical maximizers seem to saturate the diameter constraint at exactly two points. This suggests \emph{a posteriori} that a simpler parametrization should work. Moreover, it seems to be enough to only impose the diameter condition for exactly two points.

One may consider the segment $[-D/2,D/2]\times\{0\}$ in $\Bbb{R}^2$ and the family of shapes defined as regions contained between the graphs of two functions $f_1,f_2 : [-D/2,D/2] \to \mathbb{R},\ f_1\leq f_2$. From a discrete point of view $f_1$ and $f_2$ are discretized at an equidistant family of points in $[-D/2,D/2]$ with values $p_1,...,p_N$ and $q_1,...,q_N$. The convexity of $f_1$ and the concavity of $f_2$ translate to the discrete inequalities
\[ \frac{f_{i-1}+f_{i+1}}{2} \geq f_i,\ \frac{q_{i-1}+q_{i+1}}{2} \leq q_i,\ i=1,...,N\]
with the convention $p_0=p_{N+1}=q_0=q_{N+1}=0$. Given values $p_i\leq q_i,\ i=1,...,N$, the discrete domain is meshed in FreeFEM and the Steklov eigenvalue problem is solved using finite elements as before. The computation of the gradient with respect to the variables $p_i,q_i$ is similar to what was done with the support function. One only needs to keep in mind that a perturbation in these variables amounts to a perturbation in the $y$ direction of the normal to the boundary of $\Omega$.

The resulting numerical algorithm gives exactly the same results as those shown in Figure \ref{fig:results}. Moreover, even if the diameter constraint is not imposed during the optimization at other points than the endpoints of the segment $[-D/2,D/2]$, the numerical shapes obtained verify the diameter constraint everywhere.

This alternative method has the advantage that it can also handle the non-convex case. Indeed, if we do not impose that $f_1$ is convex and $f_2$ is concave during the optimization process we obtain the non-degenerate shapes shown in Figure \ref{fig:results_non_convex}. Note that for the first eigenvalue, the result is a slight loss of convexity near the two corners observed in the domain. However, the corresponding maximal eigenvalue is only a bit larger than the one obtained imposing the convexity constraint. For $k\in \{2,3\}$ we observe obvious departs from the convexity near the parts where the results in Figure \ref{fig:results} contained segments in the boundary. One may note similarities between the maximizer of $\sigma_2(\Omega)D(\Omega)$ and the maximizer of $\sigma_2(\Omega)$ under area constraint shown in \cite{BBG}, but the case $k=3$ is completely different.

 The same remarks as in the convex case hold: the $k$-th eigenvalue is multiple at the optimum, the diameter constraint is saturated at exactly two points and the minimizers become flatter as $k$ grows. The fact that the numerical algorithm does find non-degenerate shapes suggests that the existence of a maximizer should hold even without the convexity assumption.

\begin{figure}
	\centering
	\begin{tabular}{m{0.3\textwidth}m{0.3\textwidth}m{0.3\textwidth}}
	\includegraphics[width=0.29\textwidth]{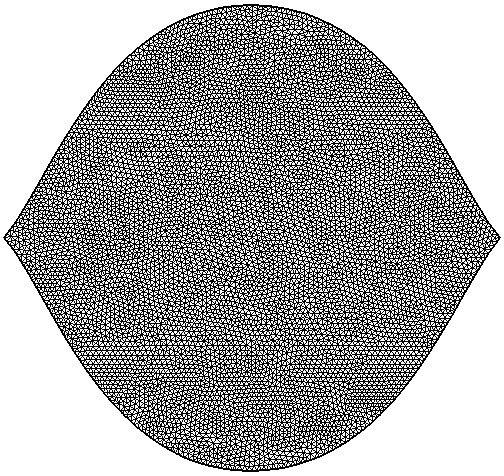}&
	\includegraphics[width=0.29\textwidth]{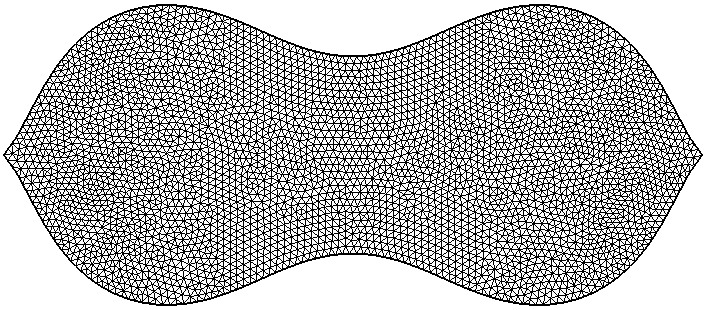}&
	\includegraphics[width=0.29\textwidth]{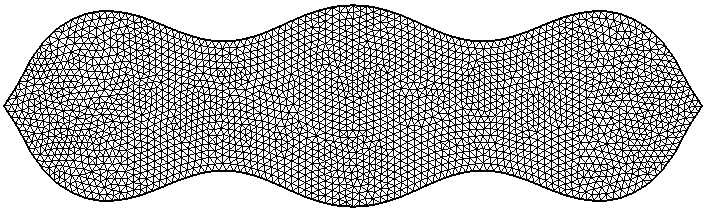} \\
   \centering	$\sigma_1(\omega_1^*)D(\omega_1^*) = 2.13623$  & \centering$\sigma_1(\omega_2^*)D(\omega_2^*) = 4.92925$  & \centering$\sigma_1(\omega_3^*)D(\omega_3^*) = 7.76108$
	\end{tabular}
\caption{Numerical results obtained using the alternative method for $k\in \{1,2,3\}$: maximization of $\sigma_k(\Omega)D(\Omega)$ without the convexity constraint.}
\label{fig:results_non_convex}
\end{figure}

\begin{center}
	\textsc{Acknowledgments}
\end{center}

This work was partially supported by the project ANR-18-CE40-0013 SHAPO financed by the French Agence Nationale de la
Recherche (ANR).

\medskip

Abdelkader \textsc{Al Sayed}, Institut \'Elie Cartan de Lorraine, UMR 7502, Universit\'e de Lorraine CNRS, email: \texttt{alsayed.abdkader@gmail.com }

Beniamin \textsc{Bogosel}, Centre de Math\'ematiques Appliqu\'ees, Ecole Polytechnique, UMR CNRS 7641, email: \texttt{beniamin.bogosel@polytechnique.edu}

Antoine \textsc{Henrot}, Institut \'Elie Cartan de Lorraine, UMR 7502, Universit\'e de Lorraine CNRS, email: \texttt{antoine.henrot@univ-lorraine.fr} (corresponding author)

Florent \textsc{Nacry}, Laboratoire de Math\'ematiques, Physique et Syst\`emes, Universit\'e de Perpignan Via Domitia, 52 Avenue Paul Alduy, 66860 Perpignan, email: \texttt{florent.nacry@univ-perp.fr}

\end{document}